\documentclass[oneside,english]{amsart}
\usepackage[T1]{fontenc}
\usepackage[latin9]{inputenc}
\usepackage{geometry}
\geometry{verbose,tmargin=3cm,bmargin=3cm,lmargin=3cm,rmargin=3cm}
\usepackage{amstext}
\usepackage{amsthm}
\usepackage{hyperref}

\hypersetup{
   colorlinks = true,
   urlcolor = blue,
   linkcolor = red
}

\newcommand{\hyp}[5]{\,\mbox{}_{#1}F_{#2}\!\left(
\genfrac{}{}{0pt}{}{#3}{#4};#5\right)}

\newcommand{\Z}{{\mathbb Z}}
\newcommand{\N}{{\mathbb N}}

\newcommand{\C}{{\mathbb C}}


\newcommand{\z}{{\zeta_G}}
\newcommand{\zs}{{\zeta_G^\ast}}

\numberwithin{equation}{section}
\numberwithin{figure}{section}
\newtheorem{thm}{Theorem}

\newtheorem{cor}[thm]{Corollary}
\newtheorem{remark}[thm]{Remark}
\newtheorem{lem}[thm]{Lemma}
\newtheorem{rem}[thm]{Remark}
\newtheorem{prop}[thm]{Proposition}

\setlength{\arraycolsep}{0.5mm}
\numberwithin{equation}{section}

\DeclareMathOperator{\Ai}{Ai}
\newcommand{\pFq}[5]{  {{}_{#1}F_{#2}}\left( \genfrac..{0pt}{}{#3}{#4};#5\right)}

\begin{document}

\title{Multiple zeta values for classical special functions}

\author{Tanay Wakhare$^\ast$, Christophe Vignat$^\dag$}

\address{$^\ast$~University of Maryland, College Park, MD 20742, USA}
\email{twakhare@gmail.com}
\address{$^\dag$~Tulane University, New Orleans, LA 70118, USA and Universit\'{e} Paris Sud, France}
\email{cvignat@tulane.edu, christophe.vignat@u-psud.fr}






\begin{abstract}We compute multiple zeta values (MZVs)
built from the zeros of various entire functions, usually special functions with physical relevance. 
In the usual case, MZVs and their linear combinations are
evaluated using a morphism between symmetric functions and multiple zeta values. 
We show that this technique can be extended to the zeros of any entire function, and 
as an illustration, we explicitly compute some MZVs based on the
zeros of Bessel, Airy, and Kummer hypergeometric functions. We highlight several approaches to the theory of MZVs, such as exploiting the orthogonality of various polynomials and 
fully utilizing the Weierstrass representation of an entire function. On the way, an identity for 
Bernoulli numbers by Gessel and Viennot is 
revisited and generalized to Bessel-Bernoulli polynomials, 
and the classical Euler identity between the Bernoulli numbers and Riemann zeta function at even argument is extended to this same class.
\end{abstract}
\maketitle

\def \sha{{\,\amalg\hskip -3.6pt\amalg\,}}

\section{Introduction}
\label{section1}
Since their introduction in the $1990s$, multiple zeta values have come to be seen as fundamental to many disparate topics \cite{ZagierSurvey}, such as quantum field theory \cite{Broadhurst} or knot invariants \cite{Dunin}. A natural generalization is \textbf{generalized multiple zeta functions}, which are built from the zeros of an arbitrary function. Most of the previous work on multiple zeta values has centered on the prototypical Riemann multiple zeta function. However, over the past decade, work has appeared focusing on zeta functions built from other special functions. Zeta functions built from the roots of Kummer, Airy, and Bessel functions have appeared in scattered works ranging from statistics \cite{Flajolet} to quantum mechanics \cite{Crandall}. We extend the study of these zeta functions into the realm of multiple zeta values, and compute many special cases.

We begin with some basic definitions. One of the most important functions in mathematics is the Riemann zeta function $\zeta(s):=\sum_{n=1}^\infty \frac{1}{n^s}$. We can reinterpret this expression as a formal rescaled sum over the zeros of the trigonometric sine function, and consider more general zeta functions built from the zeros of classical special functions. Given some function $G(z)$, we denote by $\{a_n\}$ the set of zeros (assumed to be non-zero complex numbers) of $G$ indexed by the natural numbers. Then we define a \textbf{zeta function} associated with $G$ as
\begin{equation}
\z(s) = \sum_{n=1}^{\infty} \frac{1}{a_n^s}.
\end{equation}
We can also construct a \textbf{multiple zeta function} as 
\begin{equation}
\label{MVZ}
\z(s_1,\ldots,s_r) = \sum_{n_1 > n_2 > \cdots > n_r \geq 1} \frac{1}{a_{n_1}^{s_1}\cdots a_{n_r}^{s_r}}
\end{equation}
and a \textbf{multiple zeta-star function} as 
\begin{equation}
\zs(s_1,\ldots,s_r) = \sum_{n_1 \geq n_2 \geq \cdots \geq n_r \geq 1} \frac{1}{a_{n_1}^{s_1}\cdots a_{n_r}^{s_r}}.
\end{equation}
The \textbf{weight} of a multiple zeta function is $\sum_{i=1}^r s_i$ and the \textbf{depth} is $r$. For convenience, we let $\z(\{s\}^n)$ denote $\z(\{s,s,\ldots,s\})$ where $s$ is repeated $n$ times. We let the same convention hold for $\zs$.
When we take $G(z)=\frac{1}{\Gamma(1-z)}$, which has zeros at $\{1,2,3,\ldots\}$, we recover a scaled version of the familiar Riemann zeta function. 
We use $\zeta(s_1,\ldots,s_r)$ and $\zeta^\ast(s_1,\ldots,s_r)$, without the subscript $G$, to denote the classical Riemann multiple zeta functions (the case $a_n=n$). Multiple zeta values (MZVs) and multiple zeta star values (MZSVs) refer to evaluations of these functions at positive integral arguments.


Given these definitions, we can define a "quantum zeta function" attached to any quantum system which gives us tight bounds on the ground state energy of the system. Formally, given a system with eigenvalues $\{E_0,E_1,\ldots\}$, arranged in decreasing order of magnitude,  the quantum zeta function $Z(s)$ is defined as the series
\begin{equation}
Z(s) = \sum_{n=0}^\infty \frac{1}{E_n^s}.
\end{equation}
For $s\in \C$ with sufficiently large real part, this sum will converge absolutely in most cases. Importantly, in some situations where no energy eigenvalues can be exactly calculated in terms of elementary special functions, the quantum zeta function still may have a convenient closed form. Given this closed form, the ground state energy $E_0$ can be tightly approximated as $Z(s)^{\frac{1}{s}}$ for large $s$, since the term $\frac{1}{E_0^s}$ will dominate in the sum $Z(s)$. This was exploited in the case of the "quantum bouncer" in \cite{Crandall}: this system is the solution of the Schr\"{o}dinger equation associated with the potential
\[
V\left( x \right) = 
\begin{cases}
+\infty,& \,\, x<0,\\
kx,& \,\, x\ge 0, k>0.
\end{cases}
\]
Since in this case the energy eigenvalues are proportional to the zeros of the Airy function,
$Z(s)$ is a rescaled version of the zeta function associated to the Airy function. 

Now, the knowledge of a quantum multiple zeta function $Z(s_1,s_2,\ldots,s_r),$ as defined in the obvious way, allows us to extract, using the same  asymptotic mechanism, an approximated value of the product $E_1E_2\cdots E_r$ since
\[
Z(\{s\}^r)^\frac{1}{s} \sim \frac{1}{E_1E_2\cdots E_r}  
\]
for large $s.$ This means that even when  energy eigenvalues  cannot be calculated exactly,  close approximations can be extracted for their partial products by considering the quantum multiple zeta function. Thus, information about the zeta and multiple zeta values associated to special functions will inform future work on many quantum systems, due to the fundamental connection between many simple systems and classical special functions. To our surprise,  this result seems to be new.

Additionally, MZVs arise in the calculation of renormalization constants for Feynman diagrams, see \cite{Broadhurst}. While previous work with physical applications has centered on the Riemann case, we consider more exotic MZVs which are built from the zeros of other classical special functions. This will inform future work on quantum systems which depend on the zeros of various special functions. While doing this, we address two important questions in this paper:
\begin{itemize}
\item given a function and its zeros, what does the knowledge of the Weierstrass product factorization of this function tell us about the MZV built on these zeros ?
\item what is the benefit of introducing generalized Bernoulli numbers in the  computation of the MZV's?
\end{itemize}


In Section \ref{section2}, we introduce our various special functions, and discuss the basic product/sum duality that yields multiple zeta values. We also discuss an extension of previous results to entire functions of arbitrary genus, which have not previously been considered in the literature. In Section \ref{section3}, we study the hypergeometric zeta function, and explicitly calculate the first few multiple hypergeometric zeta values. Section \ref{section4} forms the bulk of this paper; we study the Bessel zeta function, which reduces to the Riemann zeta function as a special case. We derive Bessel analogs of the closed form of $\zeta(2n)$ and the Gessel-Viennot Bernoulli number identity. Finally, in Section \ref{section5} we consider the Airy zeta function, and introduce Airy Bernoulli numbers to facilitate the computation of the associated MZVs.

\section{Definitions and Methodology}\label{section2}
Our methodology is based on a  high level sum/product duality which has been explored since the time of Euler. We give a short synopsis here, along with all the definition we will need throughout this paper.

Given a set of formal indeterminates $\{x_i\}$, we can define symmetric functions which are unchanged by any permutation of these variables. Following Macdonald \cite[(Chapter 1)]{Macdonald}, three fundamental symmetric functions are defined, each set of which is a basis for the ring of symmetric polynomials. The elementary symmetric functions are defined as $e_r = \sum_{i_1 < i_2 < \ldots < i_r }x_{i_1}x_{i_2}\cdots x_{i_r},$ the complete symmetric functions are defined as $h_r = \sum_{i_1 \leq i_2 \leq \ldots \leq i_r }x_{i_1}x_{i_2}\cdots x_{i_r}$, and the power sums are defined as $p_r = \sum_i x_i^r$. We also require $e_0=h_0=p_0=1$. The key insight of many previous authors is the striking resemblance between the symmetric functions and multiple zeta (star) values. In fact, under the correct specialization of the $\{x_i\}$,  MZVs can be exactly recovered, as encoded by the following obvious lemma.

\begin{lem}\label{lemma1}
When $x_i = \frac{1}{a_i^s}$, $p_k=\z(ks)$, $e_k = \z(\{s\}^k)$, and $h_k =  \zs(\{s\}^k)$.
\end{lem}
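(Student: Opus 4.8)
The plan is simply to substitute $x_i = 1/a_i^s$ into each of the three defining sums and match the result against the corresponding zeta function, so the entire argument reduces to bookkeeping with summation indices. First I would dispatch the power sum: by definition $p_k = \sum_i x_i^k$, and the substitution gives $\sum_i (a_i^{-s})^k = \sum_i a_i^{-ks}$, which is exactly $\z(ks)$. No reindexing is needed here, since a power sum imposes no ordering on its single index.

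For the elementary symmetric function, substitution yields $e_k = \sum_{i_1 < i_2 < \cdots < i_k} a_{i_1}^{-s} a_{i_2}^{-s} \cdots a_{i_k}^{-s}$, whereas the multiple zeta value $\z(\{s\}^k)$ sums the same product over strictly \emph{decreasing} index chains $n_1 > n_2 > \cdots > n_k \ge 1$. The key point to verify is that these two index sets contribute the same total. Since every factor carries the identical exponent $s$, the summand is a symmetric function of the chosen indices; the order-reversing bijection $n_j \mapsto i_{k+1-j}$ carries each decreasing chain to an increasing chain while leaving the product unchanged, so $e_k = \z(\{s\}^k)$.

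The complete symmetric case is handled identically after replacing the strict inequalities by weak ones: substitution gives $h_k = \sum_{i_1 \le i_2 \le \cdots \le i_k} a_{i_1}^{-s} \cdots a_{i_k}^{-s}$, the same order-reversing relabeling matches this against the weakly decreasing chains defining $\zs(\{s\}^k)$, and equality follows.

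I do not anticipate any genuine obstacle; the lemma is labeled obvious precisely because it is a direct translation of definitions. The one point I would state explicitly is why the opposing ordering conventions ($i_1 < \cdots < i_k$ for symmetric functions versus $n_1 > \cdots > n_k$ for the zeta functions) produce no discrepancy, namely that the repeated argument $\{s\}^k$ makes the summand invariant under permuting its indices. This symmetry is exactly what would fail for a genuine MZV with distinct arguments $s_1, \ldots, s_r$, which is why the correspondence holds only at the single-argument specialization asserted in the statement.
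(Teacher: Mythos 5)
Your proof is correct and is precisely the definitional verification the paper has in mind; the paper offers no proof at all, labeling the lemma ``obvious,'' and your argument (direct substitution plus the order-reversing bijection justified by the symmetry of the repeated argument $\{s\}^k$) is the standard way to make that explicit. Your closing remark about why the correspondence is special to equal arguments is a worthwhile clarification that the paper leaves unsaid.
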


The symmetric functions satisfy the following fundamental relations:
\begin{align}
E(t) &= \sum_{k=0}^{\infty}e_k t^k = \prod_{i=1}^{\infty}\left(1+tx_i\right), \label{eprod} \\
H(t) &= \sum_{k=0}^{\infty}h_k t^k = \prod_{i=1}^{\infty}\left(1-tx_i\right)^{-1} = E(-t)^{-1}, \label{hprod} \\
P(t) &= \sum_{k=1}^{\infty}p_k t^{k-1} = \sum_{i=1}^{\infty}\frac{x_i}{1-tx_i} = \frac{H'(t)}{H(t)} = \frac{E'(-t)}{E(-t)} \label{pprod}.
\end{align}
Because of the sum/product duality inherent to symmetric functions,  various product identities can be used to recover values for $e_k$ and $h_k$, which correspond to special values of multiple zeta functions. While the connection between MZVs and symmetric polynomials has been systematically exploited to prove various general theorems, such as the sum and duality theorems \cite{Zudilin, Ihara, Henderson}, we are concerned here with several special cases. If enough information is available about the series expansion of a generating product $$ \prod_{i=1}^{\infty}\left(1+\frac{t}{a_i^s}\right),$$
for instance under the form of a Weierstrass factorization,  various multiple zeta values can be extracted. This approach has been implicitly pursued by many authors, but very infrequently stated explicitly.

The main innovation of this paper is to extend this basic sum/product duality to entire functions of higher order, which have Weierstrass factorizations with higher order exponential terms. While the theory of infinite products and MZVs have been inextricably linked since the time of Euler, we believe the Weierstrass factorization has been underutilized. 
More formally, let $\psi(z)$ be an entire function with zeros $\{z_1,z_2,\ldots\}$ indexed by decreasing order of magnitude. Then if $\psi$ has 
rank $p$ (i.e. $p$ is the smallest integer $\geq 0$ such that $\zeta_\psi(p+1)$ converges), we have 
Weierstrass canonical representation \cite{Simon}

\begin{equation}\label{Weierstrass}
\psi(z) = z^m e^{Q(z)} \prod_{n=1}^\infty \left( 1- \frac{z}{z_n} \right) \exp\left( \frac{z}{z_n} + \cdots + \frac{1}{p}\left(\frac{z}{z_n} \right)^p \right),
\end{equation}
where $Q(z)$ is some polynomial of degree $\rho \le p.$

Start by taking a logarithmic derivative:
\begin{align}
\frac{\psi'(z)}{\psi(z)} = \frac{m}{z} + Q'(z) + \sum_{n=1}^\infty \frac{-1}{z_n-z} + \frac{1}{z_n}\left(1+ \frac{z}{z_n} + \cdots + \left(\frac{z}{z_n} \right)^{p-1} \right).
\end{align}
Expanding the geometric series in some sufficiently small neighborhood of $0$, where $$\frac{1}{z_n-z} = \frac{1}{z_n}  \left(1+ \frac{z}{z_n} + \cdots \right),$$
the summation reduces to 
\begin{equation}
 \sum_{n=1}^\infty \frac{-1}{z_n-z} + \frac{1}{z_n}\left(1+ \frac{z}{z_n} + \cdots + \left(\frac{z}{z_n} \right)^{p-1} \right) = -  \sum_{n=1}^\infty \sum_{k= p}^\infty \frac{z^k}{z_n^{k+1}} =- \sum_{k=p}^\infty z^k \zeta_\psi(k+1).
\end{equation}
We then have the zeta generating function 
\begin{equation}
\sum_{k=p}^\infty z^k \zeta_\psi(k+1) = \frac{m}{z} + Q'(z) - \frac{\psi'(z)}{\psi(z)}.
\end{equation}

This is essentially the duality between elementary symmetric functions (with generating function $E(t)$) and power sums (with generating function $P(t)$), which is classically $P(t) = \frac{E'(-t)}{E(-t)}$ \eqref{pprod}. However, here we have the correction factor $\frac{m}{z} + Q'(x)$ which does not appear in the prior case. From the 
Weierstrass factorization \eqref{Weierstrass},  various MZVs can also be calculated. Start by normalizing 
\begin{equation}
\Psi(z):=\frac{\psi(z)}{z^m e^{Q(z)}} =  \prod_{n=1}^\infty \left( 1- \frac{z}{z_n} \right) \exp\left( \frac{z}{z_n} + \cdots + \frac{1}{p}\left(\frac{z}{z_n} \right)^p \right),
\end{equation}
which is a product solely over (non zero) zeros. Hence the Weierstrass factorization indicates precisely which correction factor we need to normalize an entire function by in order to extract its associated MZVs. In particular, this is the reason why we consider normalized Bessel functions of the first kind \eqref{eq:j_nu} in order to compute Bessel MZVs. Without the Weierstrass factorization, this would be extremely difficult to derive. Additionally, given such a normalized product,  MZVs can be extracted using a convenient property of roots of unity: for $\omega$ a primitive $(p+1)$-th root of unity, $\sum_{k=0}^{p} \omega^{kl} = 0$ for $1\leq l \leq p$, which can be verified using a geometric series. Therefore, for sufficiently small $z$ we calculate 
\begin{align}
\prod_{k=0}^p \Psi(\omega^k z) &= \prod_{n=1}^\infty   \prod_{k=0 }^p\left({1 - \frac{z \omega^k }{z_n}} \right)\times \exp\left( \sum_{k=0}^p\frac{z\omega^k}{z_n} + \cdots + \frac{1}{p}\left(\frac{z\omega^k}{z_n} \right)^p \right) \\
&=  \prod_{n=1}^\infty \left( 1 -  \frac{(-1)^{p+1}z^{p+1}}{z_n^{p+1}} \right)\\
&= \sum_{k\ge0}\left( -1 \right)^{kp}z^{k\left( p+1 \right)}\zeta_\psi(\{p+1\}^k).\label{psigenfunc}
\end{align}
We take a $(p+1)$-fold product to eliminate the exponential term, and the last equality follows from Equation \eqref{eprod}.
Taking the reciprocal (with a minus sign) then gives us a generating product for $\zeta_\psi^\ast(\{p+1\}^n)$. This intuitively explains some well-known properties of MZVs -- for instance, in the classical Riemann zeta case  we have compact closed form expressions for $\zeta(\{2\}^n)$ but not $\zeta(\{3\}^n)$. Therefore, we expect entire functions of order $p$ to have easily obtainable MZVs at multiples of $p+1$ but not at other arguments, since there usually is no obvious factorization of the generating product for other repeated arguments. The Weierstrass factorization thus has two large advantages which have not been stressed in the literature:
\begin{itemize}
\item it gives us the correct prefactor $z^m e^{Q(z)}$ to normalize our original function by,
\item it allows us to compute MZVs and MZSVs as a $(p+1)$-fold product.
\end{itemize}

We stress that the special case where $p=1$ frequently appears. Then, the computation of $\zeta_G(\{2\}^n)$ is equivalent to the computation of a series expansion for $\Psi(z)\Psi(-z)$, which is often obtainable using known integral representations or direct summation arguments. For example, consider the rank one function $\psi(z) = \frac{1}{\Gamma(1-z)}$, which gives the Riemann zeta function. We obtain the normalized product $\Psi(z)\Psi(-z) = \frac{\sin(\pi z)}{\pi z}$, from which it follows that $\zeta(\{2\}^n) = \frac{\pi^{2n}}{(2n+1)!}$.

Let us now give a brief summary of the special functions we consider. We heavily depend on the theory of hypergeometric functions. Define the \textbf{Pochhammer symbol} as $(z)_n := (z)(z+1)\cdots(z+n-1),\ (z)_0:=1,\ z\in\C$ and the associated shorthand $(a_1,a_2,\ldots,a_r)_n:=(a_1)_n(a_2)_n\cdots (a_r)_n$. Then define the \textbf{hypergeometric function} ${}_rF_s: \C^r\times(\C\setminus-\N_0)^s\times(\C\setminus(1,\infty))\to\C$ as  the series \cite[(9.14, Page 1010)]{Table}
\[
\pFq{r}{s}{a_1 , \cdots , a_r}{b_1 , \cdots , b_s}{z}
:= \sum_{p=0}^{\infty}\frac{{(a_{1},\ldots,a_{r})_{p}}}
{{(b_{1},\ldots,b_{s})_{p}}}\frac{z^{p}}{p!},
\]
for $|z|<1$ and elsewhere by analytic continuation.

We next consider the Bessel function of the first kind, which can be defined in terms of the series expansion \cite[(8.401, Page 910)]{Table} 
\[
J_\nu(z) = \frac{z^\nu}{2^\nu}\sum_{k=0}^\infty  \frac{(-1)^k}{k!\Gamma(\nu+k+1)}\left(\frac{z}{2}\right)^{2k}.
\]
This leads to the definition of the modified Bessel function $I_\nu(x) := i^{-\nu}J_\nu(ix).$ Finally, we consider the Airy function defined in terms of the modified Bessel function \cite[xxxviii]{Table} by 
\[
\Ai\left(z\right)=\frac{\sqrt{z}}{3}\left(I_{-\frac{1}{3}}\left(\xi\right)-I_{\frac{1}{3}}\left(\xi\right)\right),
\]
with $\xi=\frac{2}{3}z^{\frac{3}{2}}$.

\section{Hypergeometric zeta}\label{section3}

As our first special case, we study the \textbf{hypergeometric zeta function}, which is defined by
\[
\zeta_{a,b}\left(s\right)=\sum_{k\ge1}\frac{1}{z_{a,b;k}^{s}},
\]
where $z_{a,b;k}$ are the zeros of the classic Kummer function
\[
\Phi_{a,b}\left(z\right):={}_{1}F_{1}\left(\begin{array}{c}
a\\
a+b
\end{array};z\right).
\]
This function is real-valued so that the zeros $z_{a,b;k}$ are pair-wise complex conjugates.
There are several important special cases; for $a=1,b=0$, we have
\[
\Phi_{1,0}\left(z\right) = _{1}F_{1}\left(\begin{array}{c}
1\\
1
\end{array};z\right)=e^{z}.
\]
Under the specialization $b=a$, we have the result 
\[
\Phi_{a,a}\left(z\right) ={}_{1}F_{1}\left(\begin{array}{c}
a\\
2a
\end{array};z\right)=e^{\frac{z}{2}}2^{a-\frac{1}{2}}\Gamma\left(a+\frac{1}{2}\right)\frac{I_{a-\frac{1}{2}}\left(\frac{z}{2}\right)}{\left(\frac{z}{2}\right)^{a-\frac{1}{2}}}.
\]
Up to a constant factor this has the same zeros as the Bessel $J$ function, which can be seen by comparing series expansions. However, note the presence of the exponential $e^{\frac{z}{2}}$, which means that we cannot derive results about Bessel MZVs as a straightforward limit of the Kummer case. This function has the Weierstrass factorization  \cite[Thm 2.6]{Christophe2}
\begin{equation}
_{1}F_{1}\left(\begin{array}{c}
a\\
a+b
\end{array};z\right)=e^{\frac{a}{a+b}z}\prod_{k\ge1}\left(1-\frac{z}{z_{a,b;k}}\right)e^{\frac{z}{z_{a,b;k}}}.\label{eq:factorization}
\end{equation}
We deduce a (nontrivial) generating function for $\zeta_{a,b}\left(s\right)$ as \cite[Prop 3.1]{Christophe2}
\[
\sum_{k=1}^{\infty}\zeta_{a,b}\left(k+1\right)z^{k}=\frac{b}{a+b}\left(\frac{\Phi_{a,b+1}\left(z\right)}{\Phi_{a,b}\left(z\right)}-1\right).
\]
Moreover, from \eqref{eq:factorization} we deduce
\begin{equation}
_{1}F_{1}\left(\begin{array}{c}
a\\
a+b
\end{array};i z\right){}_{1}F_{1}\left(\begin{array}{c}
a\\
a+b
\end{array};-i z\right)=\prod_{k\ge1}\left(1+\frac{z^{2}}{z_{a,b;k}^{2}}\right).\label{eq:infinite product}
\end{equation}
This expansion can be used to compute the values $\zeta_{a,b}(\left\{2\right\}^n)$
using an identity originally due to Ramanujan \cite[Entry 18, p.61]{Ramanujan} and later proved by Preece \cite{Preece}.

\begin{lem}
\label{prop:Prop1}  We have the hypergeometric identity
\begin{align}
_{1}F_{1}\left(\begin{array}{c}
a\\
a+b
\end{array};i z\right){}_{1}F_{1}\left(\begin{array}{c}
a\\
a+b
\end{array};-i z\right) & =\thinspace_{2}F_{3}\left(\begin{array}{c}
a,b\\
a+b,\frac{a+b}{2},\frac{a+b+1}{2}
\end{array};-\frac{z^{2}}{4}\right).\label{eq:ramanujan}
\end{align}
\end{lem}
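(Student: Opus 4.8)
The plan is to prove the identity \eqref{eq:ramanujan} by matching Taylor coefficients, exploiting that the right-hand side depends only on $z^2$. First I would write the left-hand side as a Cauchy product,
\[
{}_1F_1\!\left(\begin{array}{c}a\\a+b\end{array};\imath z\right){}_1F_1\!\left(\begin{array}{c}a\\a+b\end{array};-\imath z\right)=\sum_{m,n\ge0}\frac{(a)_m(a)_n}{(a+b)_m(a+b)_n\,m!\,n!}\,\imath^{m}(-\imath)^{n}z^{m+n},
\]
and collect terms of fixed total degree $m+n$. Since $\imath^{m}(-\imath)^{n}=\imath^{m+n}(-1)^{n}$, the coefficient of any odd power $z^{2N+1}$ is antisymmetric under $m\leftrightarrow n$ and so vanishes, confirming that the product is even in $z$. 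Using $\imath^{2N}=(-1)^{N}$, the coefficient of $z^{2N}$ equals $(-1)^{N}S_N$, where
\[
S_N:=\sum_{n=0}^{2N}\frac{(-1)^{n}}{n!\,(2N-n)!}\,\frac{(a)_n(a)_{2N-n}}{(a+b)_n(a+b)_{2N-n}}.
\]

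Next I would identify $S_N$ with a terminating hypergeometric value. Pulling out the $n=0$ term and examining the ratio of consecutive summands shows that
\[
S_N=\frac{(a)_{2N}}{(a+b)_{2N}\,(2N)!}\;{}_3F_2\!\left(\begin{array}{c}-2N,\ a,\ 1-a-b-2N\\ 1-a-2N,\ a+b\end{array};1\right).
\]
This ${}_3F_2$ is well-poised with distinguished numerator parameter $-2N$, since its two denominator parameters are $1+(-2N)-a$ and $1+(-2N)-(1-a-b-2N)$, so it is of Dixon type. Applying Dixon's theorem and simplifying the regular Gamma ratios via $\Gamma(z+n)/\Gamma(z)=(z)_n$ should collapse the series to $\frac{(a)_N(b)_N\,(2N)!}{(a+b)_N\,N!\,(a)_{2N}}$, whence $S_N=\frac{(a)_N(b)_N}{(a+b)_N\,(a+b)_{2N}\,N!}$. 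Finally, the Legendre duplication formula $(a+b)_{2N}=4^{N}\left(\tfrac{a+b}{2}\right)_N\left(\tfrac{a+b+1}{2}\right)_N$ turns $(-1)^{N}S_N$ into exactly the coefficient of $z^{2N}$ in the series of the claimed ${}_2F_3$ at argument $-z^2/4$; since all coefficients agree, the two entire functions coincide.

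The delicate step is the evaluation of the ${}_3F_2(1)$, because $-2N$ is a negative \emph{even} integer, which is the degenerate terminating case of Dixon's theorem. In the Gamma-quotient form the factor $\Gamma(1+\alpha/2)/\Gamma(1+\alpha)$ with $\alpha=-2N$ is a ratio of two poles and must be resolved by perturbing the single parameter $\alpha$, so that $\alpha/2$ moves at half the rate; a reflection-formula limit gives $\Gamma(1-N+\tfrac{\delta}{2})/\Gamma(1-2N+\delta)\to \tfrac{2(-1)^{N}(2N-1)!}{(N-1)!}$ as $\delta\to0$. It is exactly this factor of $2$, converting $\tfrac{(2N-1)!}{(N-1)!}$ into $\tfrac{(2N)!}{N!}$, that produces the correct constant: a naive substitution into Dixon's formula returns half the true value. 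I would therefore invoke the terminating form of Dixon's theorem directly, or justify this limit by hand. As a sanity check, the scheme reproduces $S_1=\frac{2b}{(a+1)(a+b)}$, in agreement with $\frac{(a)_1(b)_1\,2!}{(a+b)_1\,(a)_2}$.
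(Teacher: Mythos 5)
Your proof is correct, and its skeleton is the same as the paper's: expand the product as a Cauchy product, observe that the odd-degree coefficients vanish, evaluate the even-degree inner sum in closed form, and convert $(a+b)_{2N}$ into $4^N\left(\tfrac{a+b}{2}\right)_N\left(\tfrac{a+b+1}{2}\right)_N$ by duplication. The genuine difference lies in the one step that carries all the content, the evaluation of the inner sum. The paper simply asserts that the sum over $k$ "can be verified with Wilf--Zeilberger summation" to equal a certain Gamma-ratio (which conveniently carries the factor $1+(-1)^n$ making the odd case vanish), whereas you first kill the odd coefficients by the cleaner $m\leftrightarrow n$ antisymmetry and then recognize the even-index sum as a well-poised terminating ${}_3F_2(1)$ of Dixon type. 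Your parameter bookkeeping checks out: the series ratio does give numerator parameters $-2N,\ a,\ 1-a-b-2N$ against denominators $1-a-2N,\ a+b$, this is well-poised with distinguished parameter $-2N$, and the degenerate limit $\Gamma(1-N+\delta/2)/\Gamma(1-2N+\delta)\to 2(-1)^N(2N-1)!/(N-1)! = (-1)^N(2N)!/N!$ is exactly right, including the factor of $2$ you flag that a naive substitution would miss. The resulting value ${}_3F_2 = \frac{(2N)!}{N!}\frac{(a)_N(b)_N}{(a)_{2N}(a+b)_N}$ yields $S_N=\frac{(a)_N(b)_N}{N!\,(a+b)_N(a+b)_{2N}}$, matching the paper's coefficient. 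What your route buys is a self-contained, classical justification (citable to the terminating even-integer case of Dixon's theorem, e.g.\ Bailey \S3.1) in place of an unexhibited WZ certificate; what it costs is the care needed at the degenerate limit, which you correctly identify as the delicate point. Two small blemishes: your final sanity check labels as $S_1$ what is actually the value of the ${}_3F_2(1)$ at $N=1$ (namely $\tfrac{2b}{(a+1)(a+b)}$; the true $S_1$ is $\tfrac{ab}{(a+b)^2(a+b+1)}$), and if you take the perturbation route rather than citing the terminating Dixon formula you should say a word about why the limit $\delta\to 0$ may be passed inside the (then non-terminating) series. Neither affects the validity of the argument.
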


Beginning with this expansion, we can iteratively compute even multiple zeta values.
\begin{prop}
The multiple zeta value $\zeta_{a,b}\left(\left\{ 2\right\} ^{n}\right)$ is equal to
\[
\zeta_{a,b}\left(\left\{ 2\right\} ^{n}\right)=\frac{\left(-1\right)^{n}}{n!}\frac{\left(a\right)_{n}\left(b\right)_{n}}{\left(a+b\right)_{n}\left(a+b\right)_{2n}}.
\]
For example,
\[
\zeta_{a,b}\left(\left\{ 2\right\} \right)=-\frac{ab}{\left(a+b\right)^{2}\left(a+b+1\right)}
\]
and
\[
\zeta_{a,b}\left(\left\{ 2,2\right\} \right)=\frac{a(1+a)b(1+b)}{2(a+b)^{2}(1+a+b)^{2}(2+a+b)(3+a+b)}.
\]
\end{prop}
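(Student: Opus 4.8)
The plan is to obtain the formula as a single coefficient comparison, since the analytic work has already been done in Lemma~\ref{prop:Prop1}. The key observation is that $\zeta_{a,b}(\{2\}^n)$ is exactly the $n$-th elementary symmetric function in the variables $x_i = 1/z_{a,b;i}^2$: applying Lemma~\ref{lemma1} with $s=2$ gives $e_n = \zeta_{a,b}(\{2\}^n)$, and by \eqref{eprod} the generating function of these is $E(t) = \prod_{i\ge1}\left(1 + t x_i\right)$. Substituting $t = z^2$ turns this into $\prod_{k\ge1}\left(1 + z^2/z_{a,b;k}^2\right)$, which is precisely the infinite product appearing on the right of \eqref{eq:infinite product}. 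So the whole argument is to read the same product two ways.

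First I would assemble the two expansions of this product in a neighborhood of $z=0$. From the symmetric-function side,
\[
\prod_{k\ge1}\left(1 + \frac{z^2}{z_{a,b;k}^2}\right) = E(z^2) = \sum_{n\ge0} \zeta_{a,b}\left(\{2\}^n\right)\, z^{2n}.
\]
From the hypergeometric side, combining \eqref{eq:infinite product} with Lemma~\ref{prop:Prop1} and expanding the ${}_2F_3$ (using the Pochhammer duplication identity $(a+b)_{2n} = 4^n\left(\tfrac{a+b}{2}\right)_n\left(\tfrac{a+b+1}{2}\right)_n$ to clear the two half-integer denominator parameters) gives
\[
\prod_{k\ge1}\left(1 + \frac{z^2}{z_{a,b;k}^2}\right) = \sum_{n\ge0}(-1)^n \frac{z^{2n}}{n!}\,\frac{(a)_n (b)_n}{(a+b)_n (a+b)_{2n}}.
\]
Equating the coefficients of $z^{2n}$ on the two sides yields the claimed closed form at once; the word "iteratively'' is really a convenience, as the direct extraction makes an inductive route through Newton's identities relating the $e_n$ to the power sums $p_k=\zeta_{a,b}(2k)$ unnecessary.

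The two sample evaluations then follow by computing Pochhammer symbols. For $n=1$ one has $(a)_1=a$, $(b)_1=b$, $(a+b)_1=a+b$, $(a+b)_2=(a+b)(a+b+1)$, giving $-ab/[(a+b)^2(a+b+1)]$; for $n=2$ one expands $(a)_2=a(a+1)$, $(b)_2=b(b+1)$, $(a+b)_2$, and $(a+b)_4=(a+b)(a+b+1)(a+b+2)(a+b+3)$ and simplifies. There is no genuine obstacle here once Lemma~\ref{prop:Prop1} is in hand, so the only points demanding care are bookkeeping: I would check that the index set of the elementary symmetric sum $\sum_{i_1<\cdots<i_n}$ matches the ordering $n_1>\cdots>n_r$ in the definition \eqref{MVZ} (they coincide up to relabeling), and I would note that because $\Phi_{a,b}$ is real-valued its zeros occur in complex-conjugate pairs, so $\prod_{k\ge1}\left(1+z^2/z_{a,b;k}^2\right)$ is a well-defined convergent product and the two displays above are honest power-series identities near $z=0$, which is exactly what legitimizes the term-by-term coefficient comparison.
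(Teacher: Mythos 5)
Your proposal is correct and follows essentially the same route as the paper: it combines the Weierstrass-derived product identity \eqref{eq:infinite product} with the ${}_2F_3$ evaluation of Lemma~\ref{prop:Prop1} and the generating function $\prod_{k\ge1}\bigl(1+z^2/z_{a,b;k}^2\bigr)=\sum_{n\ge0}\zeta_{a,b}(\{2\}^n)z^{2n}$, then compares coefficients of $z^{2n}$. Your extra remarks on the Pochhammer duplication identity and the convergence of the product only make explicit what the paper leaves implicit.
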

\begin{proof}
We use the result of Lemma \ref{prop:Prop1} and identity \eqref{eq:infinite product}
along with the generating function (see \eqref{psigenfunc})
\[
\prod_{k\ge1}\left(1+\frac{z^{2}}{z_{a,b;k}^{2}}\right)=\sum_{n\ge0}\zeta_{a,b}\left(\left\{ 2\right\} ^{n}\right)z^{2n}.
\]
\end{proof}

The values of  $\zeta_{a,b}(\{2r\}^n)$ can be recursively computed from  $\zeta_{a,b}(\{2\}^n)$ for $r= 1,2,\ldots$ using the following general result.
\begin{thm}\label{dissectionthm}
\label{recursive1}
Take $m\in \Z$, $m> 0$ and $\omega$ a primitive $m-$th root of unity. Then
\begin{equation}
\z(\{ms\}^n) = (-1)^{n(m-1)}  \sum_{l_0+l_1+\cdots+ l_{m-1}=mn} \prod_{j=0}^{m-1} \z(\{s\}^{l_j}) \omega^{jl_j},
\end{equation}
where the $l_i$ are nonnegative integers.
\begin{proof}
We begin with the identity $1-z^m = \prod_{j=0}^{m-1} (1-z\omega^j)$ for $m>0$. Then, taking a scaled generating function for MZVs yields
\begin{align*}
\sum_{n=0}^\infty (-1)^n \z(\{ms\}^n) t^{mn} &= \prod_{k=1}^\infty \left(1 - \frac{t^m}{a^{ms}_k}\right) 
= \prod_{k=1}^\infty \prod_{j=0}^{m-1}  \left(1 - \frac{t\omega^j}{a^s_k }\right) \\
&=  \prod_{j=0}^{m-1}  \prod_{k=1}^\infty  \left(1 - \frac{t\omega^j}{a^s_k }\right) 
= \prod_{j=0}^{m-1 }\left(\sum_{n=0}^\infty \omega^{jn}(-1)^n \z(\{s\}^n) t^{n} \right).
\end{align*}
Comparing coefficients of $t^{mn}$ and simplifying completes the proof.
\end{proof}
\end{thm}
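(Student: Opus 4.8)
The plan is to reduce the statement to a coefficient comparison between two forms of the same generating product, mirroring exactly the sum/product duality recorded in Lemma \ref{lemma1} and equation \eqref{eprod}. First I would fix the specialization $x_i = a_i^{-s}$, so that $e_k = \z(\{s\}^k)$ and the elementary-symmetric generating function of \eqref{eprod} becomes
\begin{equation*}
E(u) = \prod_{k=1}^\infty\left(1 + \frac{u}{a_k^s}\right) = \sum_{n=0}^\infty \z(\{s\}^n)\, u^n,
\end{equation*}
valid for $u$ in a small disk about the origin where the product converges absolutely. Replacing $s$ by $ms$ and then $u$ by $-t^m$ produces the left-hand generating series $\sum_{n\ge0}(-1)^n \z(\{ms\}^n) t^{mn} = \prod_{k}(1 - t^m a_k^{-ms})$.

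The second step is the cyclotomic factorization $1 - z^m = \prod_{j=0}^{m-1}(1 - z\omega^j)$, applied with $z = t/a_k^s$ inside each factor, so that $1 - t^m a_k^{-ms} = \prod_{j=0}^{m-1}(1 - t\omega^j a_k^{-s})$. After interchanging the product over $k$ with the product over $j$ — legitimate in the region of absolute convergence — the outer product over $j$ factors into $m$ copies of $E$ evaluated at the scaled arguments $u = -t\omega^j$, yielding
\begin{equation*}
\sum_{n\ge0}(-1)^n \z(\{ms\}^n) t^{mn} = \prod_{j=0}^{m-1}\left(\sum_{n\ge0}(-1)^n \omega^{jn}\z(\{s\}^n)\, t^n\right).
\end{equation*}

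Finally I would extract the coefficient of $t^{mn}$ on both sides. On the left it is simply $(-1)^n\z(\{ms\}^n)$; on the right, expanding the product and collecting all terms of total degree $mn$ produces the sum over compositions $l_0 + \cdots + l_{m-1} = mn$ with nonnegative parts, and the sign $\prod_j (-1)^{l_j} = (-1)^{\sum_j l_j} = (-1)^{mn}$ pulls out of every term. Dividing through by $(-1)^n$ converts the global sign into $(-1)^{mn-n} = (-1)^{n(m-1)}$, which is precisely the claimed prefactor, while the root-of-unity weights $\prod_j \omega^{jl_j}$ survive untouched. This is the asserted identity.

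I expect no deep obstacle; the only points requiring care are bookkeeping rather than substance. The first is justifying the interchange of the products over $k$ and $j$ and the ensuing rearrangement into $m$ separate generating functions, which is exactly where the absolute convergence of $E(u)$ near $u=0$ must be invoked. The second is the sign accounting: one must track the factor $(-1)^{l_j}$ coming from each scaled argument $-t\omega^j$ and confirm that it collapses to the single exponent $n(m-1)$ after the division by the $(-1)^n$ on the left. Everything beyond these two checks is the mechanical coefficient comparison already latent in \eqref{eprod}.
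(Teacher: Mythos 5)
Your proposal is correct and follows essentially the same route as the paper: the cyclotomic factorization $1-z^m=\prod_{j=0}^{m-1}(1-z\omega^j)$ applied factorwise to the generating product, interchange of the products over $k$ and $j$, and extraction of the coefficient of $t^{mn}$. The only difference is that you spell out the sign bookkeeping $(-1)^{mn-n}=(-1)^{n(m-1)}$ and the convergence justification explicitly, which the paper leaves to the reader.
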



Though this analysis appears intractable for $r\ge 3$, for $r=2$ we have an attractive closed form in terms of a terminating hypergeometric ${}_6F_5$ function evaluated at $-1$, which may be summable for some particular values of $a$ and $b$. 
\begin{prop}\label{hyp4}
The multiple zeta value $\zeta_{a,b}\left(\left\{ 4\right\} ^{n}\right)$
is equal to
\begin{align*}
\zeta_{a,b}\left(\left\{ 4\right\} ^{n}\right) & =\frac{\left(-1\right)^{n}}{\left(2n\right)!}\frac{\left(a\right)_{2n}\left(b\right)_{2n}}{\left(a+b\right)_{2n}\left({a+b}\right)_{4n}}\\
 & \times {}_{6}F_{5}\left(\begin{array}{c}
-2n,1-2n-a-b,1-2n-\frac{a+b}{2},1-2n-\frac{a+b+1}{2},a,b\\
1-2n-a,1-2n-b,a+b,\frac{a+b}{2},\frac{a+b+1}{2}
\end{array};-1\right).
\end{align*}
For example,
\[
\zeta_{a,b}\left(\left\{ 4\right\} \right)=-\frac{ab(a^{3}+a^{2}(1-2b)+b^{2}(1+b)-2ab(2+b)}{(a+b)^{4}(1+a+b)^{2}(2+a+b)(3+a+b)}.
\]
\end{prop}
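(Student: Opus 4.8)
The plan is to realize $\zeta_{a,b}(\{4\}^n)$ as a diagonal convolution of the already-known values $\zeta_{a,b}(\{2\}^k)$ and then massage the resulting finite sum into hypergeometric form. First I would invoke Theorem \ref{dissectionthm} with $m=2$, $s=2$, and $\omega=-1$ (a primitive square root of unity), which gives at once
\[
\zeta_{a,b}(\{4\}^n) = (-1)^n \sum_{l_0+l_1=2n} \zeta_{a,b}(\{2\}^{l_0})\,\zeta_{a,b}(\{2\}^{l_1})\,(-1)^{l_1}.
\]
Writing $k=l_1$ and $l_0=2n-k$, and substituting the closed form $\zeta_{a,b}(\{2\}^k)=\frac{(-1)^k}{k!}\frac{(a)_k(b)_k}{(a+b)_k(a+b)_{2k}}$ from the preceding proposition, turns this into a single explicit sum over $k$ from $0$ to $2n$ of products of Pochhammer symbols and factorials, with an overall weight $(-1)^{l_1}=(-1)^k$ inherited from the root of unity.

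Next I would factor out the $k=0$ term. That term is exactly $\zeta_{a,b}(\{2\}^{2n})=\frac{1}{(2n)!}\frac{(a)_{2n}(b)_{2n}}{(a+b)_{2n}(a+b)_{4n}}$, which together with the prefactor $(-1)^n$ reproduces the claimed multiplicative constant in front of the ${}_6F_5$. The remaining task is to show that each normalized summand, divided by this $k=0$ value, is precisely the general term of the advertised ${}_6F_5$ at argument $-1$.

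The core of the argument is converting every $k$-dependent ratio into a Pochhammer symbol of lower index $k$. For this I would repeatedly use the reflection identity $\frac{(c)_{N-k}}{(c)_N}=\frac{(-1)^k}{(1-c-N)_k}$ (applied to $c\in\{a,b,a+b\}$ with $N=2n$), together with $\frac{(2n)!}{(2n-k)!}=(-1)^k(-2n)_k$; these produce the numerator entries $-2n$ and $1-2n-a-b$ and the denominator entries $1-2n-a$ and $1-2n-b$. The two remaining numerator entries $1-2n-\frac{a+b}{2}$, $1-2n-\frac{a+b+1}{2}$ and the denominator entries $\frac{a+b}{2}$, $\frac{a+b+1}{2}$ arise by applying the duplication formula $(x)_{2k}=4^k(\tfrac{x}{2})_k(\tfrac{x+1}{2})_k$ to the two ``doubled'' factors $(a+b)_{4n-2k}$ and $(a+b)_{2k}$, while the entries $a$, $b$, and the lower $a+b$ survive untouched from $(a)_k$, $(b)_k$, $(a+b)_k$.

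I expect the main obstacle to be purely bookkeeping. There are four factors of $(-1)^k$ coming from the three reflections on $a,b,a+b$ and from the factorial, and these multiply to $+1$; the single surviving $(-1)^k$ is exactly the dissection weight $\omega^{l_1}=(-1)^{l_1}$, which is what produces the hypergeometric argument $-1$. Likewise, the factor $4^k$ generated by duplicating $(a+b)_{4n-2k}$ in the numerator must cancel against the $4^k$ from duplicating $(a+b)_{2k}$ in the denominator, leaving a clean $1/k!$. Checking these cancellations carefully is the delicate part; one must also note that the numerator parameter $-2n$ forces $(-2n)_k=0$ for $k>2n$, so the series terminates and agrees with the finite convolution range, with no convergence issue. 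Finally, for the displayed example I would specialize $n=1$, where the ${}_6F_5$ collapses to a three-term sum ($k=0,1,2$), and combine over a common denominator to obtain the stated rational function of $a$ and $b$.
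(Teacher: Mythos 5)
Your proof is correct, and it rests on the same dissection $1+z^4=(1-iz^2)(1+iz^2)$ that the paper uses (the paper itself remarks that its factorization is the $m=s=2$ case of Theorem \ref{dissectionthm}). Where you diverge is in how the resulting Cauchy product is turned into a ${}_6F_5$: the paper stays at the level of generating functions, writes each factor as the ${}_2F_3$ of Lemma \ref{prop:Prop1} evaluated at $\mp i z^2/4$, and then invokes the general product formula \eqref{eq:6F5} for two ${}_2F_3$'s as a black box, afterwards discarding the vanishing odd-index terms. You instead work at the level of coefficients, substituting the closed form of $\zeta_{a,b}(\{2\}^k)$ into the finite convolution and rebuilding the ${}_6F_5$ by hand via the reflection identity $(c)_{N-k}/(c)_N=(-1)^k/(1-c-N)_k$ and the duplication $(x)_{2k}=4^k(\tfrac{x}{2})_k(\tfrac{x+1}{2})_k$. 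Your sign and $4^k$ bookkeeping is right: the four factors of $(-1)^k$ from the three reflections and the factorial cancel, leaving only the dissection weight as the argument $-1$, and the two $4^k$'s from duplicating $(a+b)_{4n-2k}$ and $(a+b)_{2k}$ cancel. The trade-off is that your route is more self-contained and elementary (it proves the needed special case of the product formula rather than citing it), at the cost of the explicit Pochhammer manipulations; the paper's route is shorter on the page but leans on an external identity.
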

\begin{proof}
We want to compute $\zeta_{a,b}\left(\left\{ 4\right\} ^{n}\right)$ from the generating function
\begin{align*}
\sum_{n\ge0}\zeta_{a,b}\left(\left\{ 4\right\} ^{n}\right)z^{4n} & =\prod_{k\ge1}\left(1+\frac{z^{4}}{z_{a,b;k}^{4}}\right)=\prod_{k\ge1}\left(1+\frac{\left(\sqrt{i}z\right)^{2}}{z_{a,b;k}^{2}}\right)\prod_{k\ge1}\left(1+\frac{\left(i\sqrt{i}z\right)^{2}}{z_{a,b;k}^{2}}\right).
\end{align*}
In fact, this is the $m=s=2$ case of Theorem \ref{dissectionthm} with $t\mapsto t^2$. Since
\begin{align*}
\prod_{k\ge1}\left(1+\frac{z^{2}}{z_{a,b;k}^{2}}\right) & =\thinspace_{2}F_{3}\left(\begin{array}{c}
a,b\\
a+b,\frac{a+b}{2},\frac{a+b+1}{2}
\end{array};-\frac{z^{2}}{4}\right),
\end{align*}
we deduce
\begin{align*}
\sum_{n\ge0}\zeta_{a,b}\left(\left\{ 4\right\} ^{n}\right)z^{4n}& =\thinspace_{2}F_{3}\left(\begin{array}{c}
a,b\\
a+b,\frac{a+b}{2},\frac{a+b+1}{2}
\end{array};-i\frac{z^{2}}{4}\right)
\thinspace_{2}F_{3}\left(\begin{array}{c}
a,b\\
a+b,\frac{a+b}{2},\frac{a+b+1}{2}
\end{array};i\frac{z^{2}}{4}\right).
\end{align*}

It is a general result for the product of two $_2F_3$ hypergeometric functions that
\begin{align}
_{2}F_{3}\left(\begin{array}{c}
a_{1},a_{2}\\
b_{1},b_{2},b_{3}
\end{array};cz\right)&{}_{2}F_{3}\left(\begin{array}{c}
\alpha_{1},\alpha_{2}\\
\beta_{1},\beta_{2},\beta_{3}
\end{array};dz\right) =\sum_{k\ge0}\frac{z^{k}}{k!}d^{k}\frac{\left(\alpha_{1}\right)_{k}\left(\alpha_{2}\right)_{k}}{\left(\beta_{1}\right)_{k}\left(\beta_{2}\right)_{k}\left(\beta_{3}\right)_{k}}\label{eq:6F5}\\
 &\times  {}_{6}F_{5}\left(\begin{array}{c}
-k,1-k-\beta_{1},1-k-\beta_{2},1-k-\beta_{3},a_{1},a_{2}\\
1-k-\alpha_{1},1-k-\alpha_{2},b_{1},b_{2},b_{3}
\end{array};\frac{c}{d}\right).\nonumber 
\end{align}
Therefore our desired hypergeometric product is
\begin{align*}
\sum_{n\ge0}\frac{z^{2n}}{n!}\left(\frac{i}{4}\right)^{n}&\frac{\left(a\right)_{n}\left(b\right)_{n}}{\left(a+b\right)_{n}\left(\frac{a+b}{2}\right)_{n}\left(\frac{a+b+1}{2}\right)_{n}}\\
&\times{}_{6}F_{5}\left(\begin{array}{c}
-n,1-n-a-b,1-n-\frac{a+b}{2},1-n-\frac{a+b+1}{2},a,b\\
1-n-a,1-n-b,a+b,\frac{a+b}{2},\frac{a+b+1}{2}
\end{array};-1\right).
\end{align*}
Notice that the odd power terms in this sum are equal to zero so that our sum can be written as
\begin{align*}
\sum_{n\ge0}\frac{z^{4n}}{\left(2n\right)!}\frac{\left(-1\right)^{n}}{4^{2n}}&\frac{\left(a\right)_{2n}\left(b\right)_{2n}}{\left(a+b\right)_{2n}\left(\frac{a+b}{2}\right)_{2n}\left(\frac{a+b+1}{2}\right)_{2n}}\\
&\times{}_{6}F_{5}\left(\begin{array}{c}
-2n,1-2n-a-b,1-2n-\frac{a+b}{2},1-2n-\frac{a+b+1}{2},a,b\\
1-2n-a,1-2n-b,a+b,\frac{a+b}{2},\frac{a+b+1}{2}
\end{array};-1\right),
\end{align*}
which gives the desired result after applying some standard Pochhammer identities.
\end{proof}

To express the associated hypergeometric star MZVSs, we need to introduce the previously defined \textbf{hypergeometric Bernoulli numbers} $B_{n}^{\left(a,b\right)}$ \cite{Christophe2},
defined through their generating function
\[
\sum_{n\ge0}\frac{B_{n}^{\left(a,b\right)}}{n!}z^{n}=\frac{1}{{_{1}F_{1}\left(\begin{array}{c}
a\\
a+b
\end{array};z\right)}}.
\]
This parallels the appearance of Bernoulli numbers when classically expressing $\zeta(2n)$. We use now the generating function
\[
\prod_{k\ge1}\left(1-\frac{z^{2}}{z_{a,b;k}^{2}}\right)^{-1}=\sum_{n\ge0}\zeta_{a,b}^{*}\left(\left\{ 2\right\} ^{n}\right)z^{2n}.
\]
By \eqref{eq:infinite product}, this is also
\[
\prod_{k\ge1}\left(1-\frac{z^{2}}{z_{a,b;k}^{2}}\right)^{-1}=\frac{1}{_{1}F_{1}\left(\begin{array}{c}
a\\
a+b
\end{array};z\right){}_{1}F_{1}\left(\begin{array}{c}
a\\
a+b
\end{array};-z\right)}.
\]
Re-expressing in terms of hypergeometric Bernoulli polynomials, we see
\begin{align*}
\prod_{k\ge1}\left(1-\frac{z^{2}}{z_{a,b;k}^{2}}\right)^{-1} & =\sum_{k,l\ge0}\frac{B_{k}^{\left(a,b\right)}B_{l}^{\left(a,b\right)}}{k!l!}\left(-1\right)^{k}z^{k+l}\\
 & =\sum_{n\ge0}\frac{z^{n}}{n!}\sum_{k=0}^{n}\binom{n}{k}\left(-1\right)^{k}B_{k}^{\left(a,b\right)}B_{n-k}^{\left(a,b\right)}.
\end{align*}
We deduce
\begin{equation}
\label{zeta*ab2n}
\zeta_{a,b}^{*}\left(\left\{ 2\right\} ^{n}\right)=\sum_{k=0}^{2n}\binom{2n}{k}\left(-1\right)^{k}B_{k}^{\left(a,b\right)}B_{2n-k}^{\left(a,b\right)}.
\end{equation}
For example,
\[
\zeta_{a,b}^{*}\left(\left\{ 2\right\} \right)=\frac{ab}{\left(a+b\right)^{2}\left(1+a+b\right)}=\zeta_{a,b}\left(\left\{ 2\right\} \right)
\]
and
\[
\zeta_{a,b}^{*}\left(\left\{ 2\right\} ^{2}\right)=\frac{ab\left(-a^{2}-a^{3}-b^{2}-b^{3}+ab\left(a+b+5\right)\left(a+b+2\right)\right)}{2(a+b)^{4}(1+a+b)^{2}(2+a+b)(3+a+b)}.
\]
Although defining and using hypergeometric Bernoulli numbers may seem here like a trivial rewriting, since it essentially defined these numbers as the coefficients of the generating product of the multiple zeta star values, this approach has the major advantage that Bernoulli numbers satisfy linear recurrences that facilitate their fast compuation. Given these easily computable Bernoulli numbers, we can then in turn calculate MZSVs. More precisely,
the hypergeometric Bernoulli numbers satisfy the linear recurrence
\[
\sum_{k=0}^{n} \binom{a+b+n-1}{k} \binom{a-1+n-k}{n-k} B_{k}^{\left( a,b \right)} = \left(a+b\right)_{n} \delta_{n},
\]
with initial condition $B_{0}^{\left( a,b \right)}=1$ \cite{Christophe2}. This recurrence is a trivial consequence of the identity \eqref{hprod}
\[
H\left( t \right)E\left( -t \right)=1
\]
and allows us to explicitly compute $B_n^{\left( a,b \right)}$ in terms of the lower-order numbers $B_{n-k}^{\left( a,b \right)}$, and in turn to deduce the values of $\zeta_{a,b}^{*}\left(\left\{ 2\right\} ^{n}\right)$ using \eqref{zeta*ab2n}.
The same technique will be used in the next sections, and gives a practical approach whenever MZV evaluations are significantly easier that those for MZSVs.

\section{Bessel zeta}\label{section4}
\subsection{Some multiple Bessel zeta values}
We apply our above sum/product methodology to the case of multiple Bessel zeta values. Consider 
\begin{equation}\label{eq:j_nu}
j_\nu(z):=2^\nu \Gamma(\nu+1) \frac{J_\nu(z)}{z^\nu},
\end{equation}
where $J_\nu$ is the Bessel function of the first kind of order $\nu$.
The function $j_\nu(z)$ is a normalized version of $J_{\nu}$ that satisfies $j_\alpha(0)=1$.
It has pairs of zeros $\{\pm z_{\nu,k},\,\ k=1,2\dots\}, $ where $z_{\nu,k}$ denotes the $k-$th zero with positive real part, the zeros being numbered in order of  their real parts. Then we have the Weierstrass factorization \cite[(8.544, Page 942)]{Table}
\begin{equation}
\label{factorization}
j_\nu(z) = \prod_{k=1}^\infty \left(1- \frac{z^2}{z^2_{\nu,k}}\right),\,\, \forall z \in \mathbb{C},
\end{equation}
along with the Bessel zeta function
\begin{equation}
\zeta_{B,\nu}(s) := \sum_{n=1}^\infty \frac{1}{z_{\nu,n}^s}.
\end{equation}
We notice that the zeta function is built from half the zeros of the function $j_{\nu}$ only, in the same way as the Riemann zeta  function is built from half of the zeros of the function $\frac{\sin \pi x}{\pi x}$.

Frappier \cite{Frappier} developed an extensive theory centered around these functions. This led him to define a family of Bernoulli polynomials $B_{n,\nu}\left(x\right)$ called ``$\alpha$-Bernoulli polynomials" \cite[(Equation 1)]{Frappier} by the generating function
\begin{equation}
\frac{e^{(x-\frac12)z}}{j_\nu(\frac{iz}{2})} = \sum_{n=0}^\infty \frac{B_{n,\nu}(x)}{n!}z^n.
\label{Bessel Bernoulli polynomials}
\end{equation}
Letting $x=\frac12$ and scaling $z$ in this definition gives
\begin{equation}\label{3.1}
\frac{1}{j_\nu(z)} = \prod_{k=1}^\infty \left(1- \frac{z^2}{z^2_{\nu,k}}   \right)^{-1} =   \sum_{n=0}^\infty \frac{B_{n,\nu}(\frac12)}{n!}\left(\frac{2z}{i}\right)^n.
\end{equation}
Mapping $z^2 \mapsto -z^2$ and using the series expansion for $J_\nu(x)$ yields the sum/product representation
\begin{equation}\label{3.2}
j_\nu(iz) = \prod_{k=1}^\infty \left(1+ \frac{z^2}{z^2_{\nu,k}}   \right) = \sum_{k=0}^{\infty}  \frac{\Gamma(\nu+1)}{k!\Gamma(\nu+k+1)}\left(\frac{z}{2}\right)^{2k}.
\end{equation}
\begin{remark}
The case $\nu=\frac{1}{2}$ recovers the Riemann zeta function
since
\[
j_{\frac{1}{2}}\left(z\right)=\frac{\sin z}{z}=\prod_{k\ge1}\left(1-\frac{z^{2}}{k^{2}\pi^{2}}\right),
\]
so that the zeros, the zeta function and the Bernoulli polynomials are
\[
z_{\frac{1}{2},k}=k\pi,\thinspace\thinspace\zeta_{B,\frac{1}{2}}\left(s\right)=\frac{1}{\pi^{s}}\zeta\left(s\right),
\thinspace\thinspace
B_{n,\frac{1}{2}}\left(z\right)=B_{n}\left(z\right).
\]
The case $\nu=-\frac{1}{2}$ corresponds to
\[
j_{-\frac{1}{2}}\left(z\right)=\cos z=\prod_{k\ge1}\left(1-\frac{z^{2}}{\left(k+\frac{1}{2}\right)^{2}\pi^{2}}\right),
\]
so that
\[
z_{-\frac{1}{2},k}=\left(k+\frac{1}{2}\right)\pi,\thinspace\thinspace\zeta_{B,-\frac{1}{2}}\left(s\right)=\frac{2^{s}-1}{\pi^{s}}\zeta\left(s\right)
\]
and the Bernoulli polynomials $B_{n,-\frac{1}{2}}\left(z\right)$ coincide with the usual Euler polynomials $E_{n}\left(z\right)$
defined by generating function
\begin{equation}\label{eulerpoly}
\sum_{n\ge0}\frac{E_{n}\left(z\right)}{n!}x^{n}=\frac{2e^{xz}}{e^{x}+1}.
\end{equation}
\end{remark}
We will then heavily depend on the symmetrized averages of zeta functions. Define
\begin{equation}
S_G(mn,k) = \sum_{|\textbf{a}|=n} \z(m a_1, \ldots, m a_k),
\end{equation}
and
\begin{equation}
S_G^\ast(mn,k) =\sum_{|\textbf{a}|=n} \zs(m a_1, \ldots, m a_k),
\end{equation}
which are averages over all multiple zeta (star) functions with weight $mn$ and depth $k$.
Hoffman \cite{Hoffman1} has shown that, in the case of the Riemann multiple zeta function, the sum $S(2n,k)$ has a closed form evaluation. Similar results have also been found for renormalized Hurwitz zeta functions \cite{Hoffman2, Chung1}. Analogously to how a quantum zeta function may have a nice closed form while individual energy eigenstates do not, the averaged $S_G(mn,k)$ might have a nice closed form while individual multiple zeta values do not \cite{Chen1}. 

The averages have the following generating functions
(see \cite{Hoffman1}), for which we provide a proof for the sake of completeness.
\begin{thm}
\label{thm1}
The following generating products hold:
\begin{align}
\prod_{k=1}^{\infty}\frac{ \left(1+\frac{(y-1)t}{a_k^s}  \right) }{\left(1-\frac{t}{a_k^s}  \right) } &=  \sum_{n=0}^{\infty}\sum_{k=0}^{n}S_G(sn,k)y^k t^n,  \label{2.1c} \\
\prod_{k=1}^{\infty}\frac{ \left(1-\frac{t}{a_k^s}  \right)}{ \left(1-\frac{(y+1)t}{a_k^s}  \right)} &= \sum_{n=0}^{\infty}\sum_{k=0}^{n}S_G^\ast(sn,k)y^k t^n. \label{2.1d}
\end{align}
\begin{proof}
When we take $x_i = \sum_{k=1}^{\infty} \frac{t^k}{a_i^{ks}}$, we have $e_n = \sum_{n=k}^{\infty} S_G(sn,k) t^n$, since the coefficient of each $t_n$ term will sum over $\left(  a_{n_1}^{k_1 s}  a_{n_2}^{k_2 s}\cdots   \right)^{-1}$ such that $\sum_i k_i = n$. This is precisely $S_G(sn,k)$, which yields
\begin{equation}
 \prod_{k=1}^{\infty}\frac{ \left(1+\frac{(y-1)t}{a_k^s}  \right) }{\left(1-\frac{t}{a_k^s}  \right) } = \prod_{i=1}^{\infty}\left(1+y\left(\frac{t}{a_i^s}+\frac{t^2}{a_i^{2s}}+\ldots \right) \right) =   \sum_{k=0}^{\infty}e_k y^k = \sum_{k=0}^{\infty}\sum_{n=k}^{\infty}S_G(sn,k)y^k t^n.
\end{equation}
Reversing the order of summation proves \eqref{2.1c}. This same choice of $x_i$ will yield $h_n =  \sum_{n=k}^{\infty} S_G^\ast(sn,k) t^n$ since we allow equality between the $n_i$. Using the relation $H(t)=E(-t)^{-1}$ we have
\begin{equation}
\sum_{k=0}^{\infty}\sum_{n=k}^{\infty}S_G^\ast(sn,k)y^k t^n = \sum_{k=0}^{\infty}h_k y^k =E(-y)^{-1} = \prod_{k=1}^{\infty}\frac{ \left(1-\frac{t}{a_k^s}  \right)}{ \left(1-\frac{(y+1)t}{a_k^s}  \right)}.
\end{equation}
Reversing the order of summation on the left hand side completes the proof.
\end{proof}
\end{thm}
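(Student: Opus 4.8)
The plan is to deduce both identities from the correspondence of Lemma~\ref{lemma1}, but with a specialization of the indeterminates $\{x_i\}$ richer than $x_i = 1/a_i^s$: namely one that packages \emph{all} powers of $1/a_i^s$ into a single geometric series in an auxiliary variable $t$. Concretely, I would set
\[
x_i = \sum_{m=1}^\infty \frac{t^m}{a_i^{ms}} = \frac{t/a_i^s}{1 - t/a_i^s}.
\]
With this choice, the formal variable $y$ in the symmetric-function generating functions $E(y)=\sum_k e_k y^k$ and $H(y)=\sum_k h_k y^k$ tracks the depth $k$, while $t$ tracks the weight, so that the right-hand sides of \eqref{2.1c} and \eqref{2.1d} emerge as bivariate generating functions once $e_k$ and $h_k$ are expanded in $t$.

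First I would establish the coefficient identity $[t^n]\, e_k = S_G(sn,k)$. Expanding $e_k = \sum_{i_1<\cdots<i_k} x_{i_1}\cdots x_{i_k}$ and substituting the geometric series, the coefficient of $t^n$ collects all terms $a_{i_1}^{-m_1 s}\cdots a_{i_k}^{-m_k s}$ with $i_1<\cdots<i_k$ and $m_1+\cdots+m_k=n$, each $m_j\ge 1$. Reading the strictly increasing index condition in the order opposite to the MZV convention of \eqref{MVZ}, the inner sum over indices is exactly $\z(m_k s,\ldots,m_1 s)$, and summing over all compositions $m_1+\cdots+m_k=n$ reproduces $S_G(sn,k)$. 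Substituting $x_i$ into the product formula \eqref{eprod} gives $1+yx_i = \bigl(1+(y-1)t/a_i^s\bigr)/\bigl(1-t/a_i^s\bigr)$, so that
\[
\prod_{i=1}^\infty \frac{1+(y-1)t/a_i^s}{1-t/a_i^s} = \sum_{k=0}^\infty e_k y^k = \sum_{k=0}^\infty \sum_{n\ge k} S_G(sn,k)\, y^k t^n;
\]
interchanging the order of summation yields \eqref{2.1c}.

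The second identity follows the same template with $h_k$ in place of $e_k$. The only change in the combinatorial step is that the nested index condition becomes $i_1\le\cdots\le i_k$, which matches the non-strict inequalities defining the starred sum $\zs$, giving $[t^n]\, h_k = S_G^\ast(sn,k)$. Using the duality $H(y)=E(-y)^{-1}$ from \eqref{hprod}, the same substitution produces $(1-yx_i)^{-1} = \bigl(1-t/a_i^s\bigr)/\bigl(1-(y+1)t/a_i^s\bigr)$, and the product formula \eqref{hprod} then gives \eqref{2.1d} after the identical reversal of summation order.

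The main obstacle is the bookkeeping in the coefficient extraction, specifically verifying that summing the nested index sums over all compositions $m_1+\cdots+m_k=n$ really collapses to the average $S_G(sn,k)$ rather than to a single MZV or to an overcounted sum. This requires care with two conventions simultaneously: the mismatch between the increasing indexing $i_1<\cdots<i_k$ of the symmetric functions and the decreasing indexing of \eqref{MVZ}, and the strict-versus-non-strict distinction that separates $\z$ from $\zs$. Once these are reconciled, everything else is routine manipulation of geometric series and generating products.
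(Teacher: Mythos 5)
Your proposal is correct and follows essentially the same route as the paper: both specialize the indeterminates to $x_i=\sum_{m\ge1}t^m/a_i^{ms}$, identify $[t^n]e_k=S_G(sn,k)$ and $[t^n]h_k=S_G^\ast(sn,k)$, and then read off the two products from \eqref{eprod} and the duality $H=E(-\,\cdot\,)^{-1}$. Your treatment of the coefficient extraction and the indexing conventions is somewhat more explicit than the paper's, but the argument is the same.
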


Using these product expansions, we can then apply the general Theorem \ref{thm1} to the Bessel zeta function
along with the corresponding $\zeta_{B,\nu}^\ast$, $S_{B,\nu}$, and $S_{B,\nu}^\ast$.

\begin{thm}
\label{Thm8}
The following evaluations hold for the zeta function built out of Bessel zeros:
\begin{align}
\zeta_{B,\nu}(\{2\}^n) &=  \frac{1}{2^{2n}n!(\nu+1)_n}, \label{zhang1}\\
\zeta_{B,\nu}^\ast(\{2\}^n) &=  \frac{B_{2n,\nu}(\frac12)}{(2n)!}2^{2n}(-1)^n,\\
S_{B,\nu}(2n,k) &= \sum_{r=k}^n (-1)^{n-k}\binom{r}{k}\frac{ 2^{2n-4r} \Gamma(\nu+1)}{r!(2n-2r)!\Gamma(\nu+r+1)}B_{2n-2r,\nu}\left(\frac12\right), \label{bessel1}\\
S_{B,\nu}^\ast(2n,k) &= (-1)^{n} \sum_{r=k}^n \binom{r}{k}\frac{\Gamma(\nu+1)}{  2^{2n-4r} (n-r)!(2r)!\Gamma(\nu+n-r+1)}B_{2r,\nu}\left(\frac12\right).
\end{align}
\begin{proof}
The results follow from a straightforward application of Theorem \ref{thm1} and Lemma \ref{lemma1} with the product expansions \eqref{3.1} and \eqref{3.2}. Note that \eqref{zhang1} was previously derived in \cite[p. 355]{Zhang}. Because of their dependence on the Bessel-Bernoulli numbers, the other three identities appear to be new. 
\end{proof}
\end{thm}
We note that \eqref{bessel1} generalizes \cite[(Theorem 3)]{Hoffman1}, which is the case $\nu=\frac12$. Also, this special case 
$\nu=\frac12$ is proved using a limiting argument in a Hurwitz-like generating function for $S\left(2n,k\right)$ in \cite{Gencev}.

\begin{thm}
The following evaluation holds:
\begin{equation}
\zeta_{B,\nu}(\{4\}^n) =  \frac{ 1}  {2^{4n}n!\left(\nu+1\right)_{2n} \left(\nu+1\right)_n}.
\end{equation}
\begin{proof}
We dissect the generating product as follows:
\begin{align*}
\sum_{n=0}^\infty (-1)^n \zeta_{B,v}(\{4\}^n)t^{2n} &= \prod_{k=1}^\infty \left(1 - \frac{t^2}{z_{\nu,k}^4} \right) \\
&= \prod_{k=1}^\infty \left(1 - \frac{t}{z_{\nu,k}^2} \right)\prod_{k=1}^\infty \left(1 + \frac{t}{z_{\nu,k}^2} \right) \\
&= \sum_{n=0}^\infty (-1)^n \zeta_{B,v}(\{2\}^n)t^{n}  \sum_{n=0}^\infty\zeta_{B,v}(\{2\}^n)t^{n}.
\end{align*}
Comparing coefficients of $t^n$, while implicitly regularizing $\zeta_{B,\nu} (\{2\}^0)=1$, yields
\begin{align*}
\zeta_{B,\nu}(\{4\}^n) &=  (-1)^{n}  \sum_{l=0}^{2n} (-1)^{l} \zeta_{B,\nu}(\{2\}^{l})\zeta_{B,\nu}(\{2\}^{2n-l}) ) \\
&= (-1)^{n}   \sum_{l=0}^{2n} (-1)^{l} \frac{\Gamma^2(\nu+1)}{2^{4n}(2n-l)!l!\Gamma(\nu+l+1)\Gamma(\nu+2n-l+1)}.
\end{align*}
This sum is now identified in terms of a Gauss hypergeometric function 
\begin{align*}
\zeta_{B,\nu}(\{4\}^n) &=   \frac{(-1)^{n} \Gamma(\nu+1)}{2^{4n}(2n)!\Gamma(2n+\nu+1)}\hyp21{-2n,-2n-\nu}{\nu+1}{-1},
\end{align*}
which can be evaluated using Kummer's identity \cite[7.3.6.2]{Prudnikov} as
\begin{equation}
\hyp21{-2n,-2n-\nu}{\nu+1}{-1} =  (-1)^n\frac{2\Gamma(\nu+1)\Gamma(2n)}{\Gamma(n+\nu+1)\Gamma(n)}.
\end{equation}
This gives the partial result
\begin{align*}
\zeta_{B,\nu}(\{4\}^n) &= \frac{ \Gamma^2(\nu+1)}  {2^{4n}n!\Gamma(2n+\nu+1) \Gamma(n+\nu+1)}.
\end{align*}
Simplifying this expression yields the final result.
\end{proof}
\end{thm}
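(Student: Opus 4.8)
\section*{Proof proposal}

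The plan is to reduce the depth-doubling quantity $\zeta_{B,\nu}(\{4\}^n)$ to a convolution of the already-known $\zeta_{B,\nu}(\{2\}^n)$ values, then collapse that convolution into a single terminating hypergeometric sum which can be summed in closed form. First I would invoke Theorem \ref{dissectionthm} in the case $m=2$, $s=2$, with $\omega=-1$ the primitive square root of unity. Since $(-1)^{n(m-1)}=(-1)^n$ and the constraint $l_0+l_1=2n$ collapses to a single index, this gives at once
\begin{equation*}
\zeta_{B,\nu}(\{4\}^n) = (-1)^n\sum_{l=0}^{2n}(-1)^l\,\zeta_{B,\nu}(\{2\}^l)\,\zeta_{B,\nu}(\{2\}^{2n-l}),
\end{equation*}
which is exactly what one reads off coefficientwise from the factorization $\prod_k(1-t^2/z_{\nu,k}^4)=\prod_k(1-t/z_{\nu,k}^2)\prod_k(1+t/z_{\nu,k}^2)$, the two factors being the generating series of $(-1)^n\zeta_{B,\nu}(\{2\}^n)$ and $\zeta_{B,\nu}(\{2\}^n)$ respectively.

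Next I would substitute the first evaluation of Theorem \ref{Thm8} in its Gamma form $\zeta_{B,\nu}(\{2\}^l)=\Gamma(\nu+1)/[2^{2l}\,l!\,\Gamma(\nu+l+1)]$. Pulling out the common prefactor $2^{-4n}\Gamma^2(\nu+1)$ leaves the sum $\sum_{l=0}^{2n}(-1)^l/[l!(2n-l)!\,\Gamma(\nu+1+l)\Gamma(\nu+1+2n-l)]$. The routine step is to normalize each summand against the $l=0$ term: using $(2n)!/(2n-l)!=(-1)^l(-2n)_l$ and $\Gamma(c)/\Gamma(c-l)=(-1)^l(1-c)_l$ with $c=\nu+1+2n$, the ratio becomes $(-2n)_l(-2n-\nu)_l(-1)^l/[(\nu+1)_l\,l!]$, so that
\begin{equation*}
\zeta_{B,\nu}(\{4\}^n)=\frac{(-1)^n\,\Gamma(\nu+1)}{2^{4n}(2n)!\,\Gamma(2n+\nu+1)}\;\hyp21{-2n,\,-2n-\nu}{\nu+1}{-1}.
\end{equation*}

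The heart of the argument is the evaluation of this terminating $_2F_1$ at $-1$. Because $1+(-2n)-(-2n-\nu)=\nu+1$ is precisely the lower parameter, we are in Kummer's configuration, and I would appeal to Kummer's summation in its terminating form \cite[7.3.6.2]{Prudnikov} to obtain
\begin{equation*}
\hyp21{-2n,\,-2n-\nu}{\nu+1}{-1} = (-1)^n\frac{2\,\Gamma(\nu+1)\Gamma(2n)}{\Gamma(n+\nu+1)\Gamma(n)}.
\end{equation*}
I expect this to be the main obstacle, and a place to be careful rather than cavalier: the top parameter $-2n$ is a negative \emph{even} integer, which makes the factor $\Gamma(1+a/2)/\Gamma(1+a)$ in the classical Kummer ratio a $0/0$ indeterminate. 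A naive limit via the reflection formula deceptively drops a factor of $2$; checking the $n=1$ case (where the convolution gives $-2/(\nu+1)$, not $-1/(\nu+1)$) confirms that the genuine terminating value carries that factor, which is exactly why one must quote the terminating summation rather than recycle the generic Kummer ratio.

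Finally I would assemble the pieces. The two signs $(-1)^n$ cancel, and the elementary Gamma reductions $\Gamma(2n)/(2n)!=1/(2n)$, $n\,\Gamma(n)=n!$, $\Gamma(\nu+1)/\Gamma(2n+\nu+1)=1/(\nu+1)_{2n}$, and $\Gamma(\nu+1)/\Gamma(n+\nu+1)=1/(\nu+1)_n$ collapse everything to $1/[2^{4n}\,n!\,(\nu+1)_{2n}(\nu+1)_n]$, as claimed. As a sanity check one may verify the $n=1$ value $1/[16(\nu+1)^2(\nu+2)]$ directly from the convolution, and specialize $\nu=\frac{1}{2}$ to recover the classical Riemann normalization of $\zeta(\{4\}^n)$.
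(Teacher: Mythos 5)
Your proposal is correct and follows essentially the same route as the paper: dissect the generating product (your appeal to Theorem \ref{dissectionthm} with $m=s=2$ is just the paper's factorization step stated abstractly), convolve the known $\zeta_{B,\nu}(\{2\}^l)$ values, recognize the terminating $\hyp21{-2n,-2n-\nu}{\nu+1}{-1}$, and sum it by Kummer's identity. Your extra care about the $0/0$ indeterminacy at the negative even upper parameter and the $n=1$ sanity check are sound (though a careful reflection-formula limit does in fact produce the factor $2$, via $\sin(2\pi n)/\sin(\pi n)\to 2(-1)^n$), and the final Gamma simplifications match the paper's.
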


\subsection{Krein's expansion and an extension of Euler's identity}

Using a recent result from \cite{Sherstyukov}, we can relate the
Bessel-Bernoulli polynomials to an alternate Bessel zeta function, thereby developing the theory of Bessel zeta functions.
This also provides intuition for the `correct' generalization of the Bessel zeta function to the multiple zeta case -- as with $q$-series, the most useful generalization is often non-intuitive.
\begin{thm}
Define the alternate Bessel zeta function $\tilde{\zeta}_{\nu}$ as
\begin{equation}\label{modified zeta}
\tilde{\zeta}_{\nu}\left(r\right)=\sum_{k\ge1}\frac{1}{j_{\nu+1}\left(z_{\nu,k}\right)z_{\nu,k}^{r+2}}.
\end{equation}
Then for $n\ge\left[\frac{\nu}{2}+\frac{1}{4}\right]+1,$ the Bessel-Bernoulli
polynomials can be expressed as 
\begin{equation}
\frac{\left(-1\right)^{n}2^{2n}}{2n!}B_{2n,\nu}\left(\frac{1}{2}\right)=4\left(\nu+1\right)\tilde{\zeta}_{\nu}\left(2n\right).\label{eq:BZeta}
\end{equation}
The case $\nu=\frac{1}{2}$ recovers Euler's identity
\[
\frac{\left(2\pi\right)^{2n}}{2\left(2n\right)!}\left(-1\right)^{n-1}B_{2n}=\zeta\left(2n\right).
\]
\end{thm}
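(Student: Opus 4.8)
The plan is to produce a Mittag--Leffler (Krein) partial-fraction expansion of $1/j_\nu(z)$, read off its Taylor coefficients at the origin, and match them against the Bessel--Bernoulli generating function \eqref{3.1}. Since $j_\nu$ is even with simple zeros at $\pm z_{\nu,k}$, the expansion recorded by Sherstyukov should take the form
\[
\frac{1}{j_\nu(z)} = 1 + \sum_{k\ge1}\frac{1}{j_\nu'(z_{\nu,k})}\left(\frac{1}{z-z_{\nu,k}}-\frac{1}{z+z_{\nu,k}}\right),
\]
where pairing $\pm z_{\nu,k}$ cancels the Mittag--Leffler convergence factors. I would then extract the coefficient of $z^{2n}$ from this series and, independently, from \eqref{3.1}, and equate the two.

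The first concrete step is the residue at a zero. Differentiating $j_\nu(z)=2^\nu\Gamma(\nu+1)z^{-\nu}J_\nu(z)$ and inserting the recurrence $J_\nu'=-J_{\nu+1}+\frac{\nu}{z}J_\nu$ together with $J_\nu(z_{\nu,k})=0$ gives, after simplifying through $\Gamma(\nu+2)=(\nu+1)\Gamma(\nu+1)$,
\[
j_\nu'(z_{\nu,k}) = -\frac{z_{\nu,k}}{2(\nu+1)}\,j_{\nu+1}(z_{\nu,k}).
\]
Substituting this, combining each pair of poles into $\frac{2z_{\nu,k}}{z^2-z_{\nu,k}^2}$, and expanding geometrically via $\frac{1}{z^2-z_{\nu,k}^2}=-\sum_{m\ge0}z^{2m}/z_{\nu,k}^{2m+2}$ should collapse the expansion to
\[
\frac{1}{j_\nu(z)} = 1 + 4(\nu+1)\sum_{m\ge0}z^{2m}\sum_{k\ge1}\frac{1}{j_{\nu+1}(z_{\nu,k})\,z_{\nu,k}^{2m+2}} = 1 + 4(\nu+1)\sum_{m\ge0}\tilde\zeta_\nu(2m)\,z^{2m},
\]
the inner sum being exactly \eqref{modified zeta} with $r=2m$. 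On the other hand \eqref{3.1} has $z^{2n}$-coefficient $\frac{(-1)^n2^{2n}}{(2n)!}B_{2n,\nu}(\tfrac12)$, the odd coefficients vanishing by evenness; matching the two for $n\ge1$ is precisely \eqref{eq:BZeta}.

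The main obstacle is analytic rather than algebraic: both the validity of the Krein expansion (the content I borrow from Sherstyukov) and the legitimacy of the term-by-term geometric expansion rest on absolute convergence of the inner sum. Using the asymptotics $z_{\nu,k}\sim(k+\tfrac{\nu}{2}-\tfrac14)\pi$ and $J_{\nu+1}(z_{\nu,k})\sim\sqrt{2/(\pi z_{\nu,k})}$, the general term of $\tilde\zeta_\nu(2n)$ behaves like $z_{\nu,k}^{\nu-2n-1/2}$, so the series converges exactly when $2n>\nu+\tfrac12$, that is $n\ge[\tfrac{\nu}{2}+\tfrac14]+1$. This is precisely the stated hypothesis, and it also explains why the formal $m=0$ comparison (which would force the divergent $\tilde\zeta_\nu(0)$ to equal $0$) must be excluded.

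For the concluding specialization I would set $\nu=\tfrac12$, where $j_{1/2}(z)=\sin z/z$, $z_{1/2,k}=k\pi$, and $B_{2n,1/2}=B_{2n}$ so that $B_{2n,1/2}(\tfrac12)=(2^{1-2n}-1)B_{2n}$. A direct computation gives $j_{3/2}(z)=\frac{3}{z^2}\!\left(\frac{\sin z}{z}-\cos z\right)$, hence $j_{3/2}(k\pi)=3(-1)^{k+1}/(k\pi)^2$ and, through the Dirichlet eta value $\sum_{k\ge1}(-1)^{k+1}k^{-2n}=(1-2^{1-2n})\zeta(2n)$,
\[
\tilde\zeta_{1/2}(2n)=\frac{(1-2^{1-2n})\zeta(2n)}{3\pi^{2n}}.
\]
Substituting both sides into \eqref{eq:BZeta} and cancelling the common factor $1-2^{1-2n}$ should recover Euler's identity $\frac{(2\pi)^{2n}}{2(2n)!}(-1)^{n-1}B_{2n}=\zeta(2n)$.
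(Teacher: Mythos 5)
Your overall route is the same as the paper's (a Krein-type partial-fraction expansion of $1/j_\nu$, a geometric expansion of the poles, matching coefficients against \eqref{3.1}, then specializing to $\nu=\tfrac12$), and your residue computation $j_\nu'(z_{\nu,k})=-\tfrac{z_{\nu,k}}{2(\nu+1)}j_{\nu+1}(z_{\nu,k})$ and the final $\nu=\tfrac12$ reduction are both correct. The gap is in the expansion you start from:
\[
\frac{1}{j_\nu(z)}=1+\sum_{k\ge1}\frac{1}{j_\nu'(z_{\nu,k})}\Bigl(\frac{1}{z-z_{\nu,k}}-\frac{1}{z+z_{\nu,k}}\Bigr)
\]
is not a valid identity. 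After pairing, the general term is $\frac{1}{j_\nu'(z_{\nu,k})}\cdot\frac{2z_{\nu,k}}{z^2-z_{\nu,k}^2}$, whose modulus grows like $z_{\nu,k}^{\nu-1/2}$ by the very asymptotics you quote, so the series diverges for every $\nu\ge-\tfrac12$ --- in particular for $\nu=\tfrac12$, where the terms become $\tfrac{2(-1)^k k^2\pi^2}{z^2-k^2\pi^2}$ and do not even tend to zero (the correct expansion of $z/\sin z$ carries an extra factor of $z^2$ in front of the sum). Consequently the generating identity you derive, $\frac{1}{j_\nu(z)}=1+4(\nu+1)\sum_{m\ge0}\tilde\zeta_\nu(2m)z^{2m}$, has divergent coefficients $\tilde\zeta_\nu(2m)$ for all $0\le m\le p-1$ with $p=\left[\tfrac{\nu}{2}+\tfrac14\right]+1$; you notice the problem at $m=0$, but ``excluding'' a term is not an option --- if the expansion were valid, that term would be forced to appear.

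The repair is exactly the extra structure in Krein's expansion that the paper uses: one writes
\[
\frac{1}{J_\nu(z)}=\frac{P_p(z)}{z^\nu}-2z^{2p-\nu}\sum_{k\ge1}\frac{1}{J_{\nu+1}(z_{\nu,k})\,z_{\nu,k}^{2p-\nu-1}\left(z^2-z_{\nu,k}^2\right)},
\]
where the weights $z_{\nu,k}^{-(2p-\nu-1)}$ make the sum absolutely convergent and the Taylor polynomial $P_p(z)$ of degree $2p-2$ (not the constant $1$) supplies the coefficients of $z^{2m}$ for $m<p$. With this version the geometric expansion produces $4(\nu+1)\sum_{q\ge0}z^{2p+2q}\tilde\zeta_\nu(2p+2q)$, i.e.\ the sum starts at $m=p$, and the comparison with \eqref{3.1} goes through precisely for $n\ge p$. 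That is the real source of the hypothesis $n\ge\left[\tfrac{\nu}{2}+\tfrac14\right]+1$: for $n<p$ the coefficient of $z^{2n}$ in $1/j_\nu(z)$ is accounted for by $P_p$, not by $\tilde\zeta_\nu(2n)$, and not merely because $\tilde\zeta_\nu(2n)$ happens to diverge there.
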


\begin{proof}
First define
\[
p:=\left[\frac{\nu}{2}+\frac{1}{4}\right]+1,
\]
where square brackets denote the integer part. We then have Krein's expansion \cite{Sherstyukov}
\[
\frac{1}{J_{\nu}\left(z\right)}=\frac{P_{p}\left(z\right)}{z^{\nu}}-2z^{2p-\nu}\sum_{k\ge1}\frac{1}{J_{\nu+1}\left(z_{\nu,k}\right)z_{\nu,k}^{2p-\nu-1}\left(z^{2}-z_{\nu.k}^{2}\right)},
\]
where $P_{p}\left(z\right)$ is the polynomial of degree $2p-2$ defined as the truncated Taylor expansion at $0$ of $\frac{z^{\nu}}{J_{\nu}\left(z\right)}$:
\[
P_{p}\left(z\right)=\sum_{m=0}^{p-1}\frac{d^{2m}}{dz^{2m}}\left(\frac{z^{\nu}}{J_{\nu}\left(z\right)}\right)\Biggr|_{ z=0}\frac{z^{2m}}{2m!}.
\]
Now take $\vert z\vert<\vert z_{\nu,1}\vert,$ the smallest zero of
$J_{\nu},$ so that
\[
\frac{1}{z_{\nu,k}^{2p-\nu-1}\left(z^{2}-z_{\nu.k}^{2}\right)}=-\frac{1}{z_{\nu,k}^{2p-\nu+1}\left(1-\frac{z^{2}}{z_{\nu.k}^{2}}\right)}=-\sum_{q\ge0}\frac{z^{2q}}{z_{\nu,k}^{2q+2p-\nu+1}}
\]
and the infinite sum in Krein's expansion is
\begin{align*}
-2z^{2p-\nu}\sum_{k\ge1}\frac{1}{J_{\nu+1}\left(z_{\nu,k}\right)z_{\nu,k}^{2p-\nu-1}\left(z^{2}-z_{\nu.k}^{2}\right)} & =2\sum_{q\ge0}z^{2p+2q-\nu}\sum_{k\ge1}\frac{1}{J_{\nu+1}\left(z_{\nu,k}\right)z_{\nu,k}^{2q+2p-\nu+1}}.
\end{align*}
Using the normalization
\[
J_{\nu+1}\left(z\right)=\frac{z^{\nu+1}}{2^{\nu+1}\Gamma\left(\nu+2\right)}j_{\nu+1}\left(z\right),
\]
this sum can be expressed as
\[
2^{\nu+2}\Gamma\left(\nu+2\right)\sum_{q\ge0}z^{2p+2q-\nu}\sum_{k\ge1}\frac{1}{j_{\nu+1}\left(z_{\nu,k}\right)z_{\nu,k}^{2q+2p+2}}.
\]
Using the definition of the alternate Bessel zeta function \eqref{modified zeta}, we obtain
\[
\frac{1}{j_{\nu}\left(z\right)}=\frac{P_{p}\left(z\right)}{2^{\nu}\Gamma\left(\nu+1\right)}+4\left(\nu+1\right)\sum_{q\ge0}z^{2p+2q}\tilde{\zeta}_{\nu}\left(2q+2p\right).
\]
Identifying with \eqref{3.1}, we deduce \eqref{eq:BZeta}.

In the case $\nu=\frac{1}{2},$ we have
$
j_{\frac{3}{2}}\left(z\right)=\frac{3}{z^{2}}\left(\frac{\sin z}{z}-\cos z\right)
$
and the zeros $z_{\frac{1}{2},k}=k\pi$, so that
\[
z_{\frac{1}{2},k}^{r+2}j_{\frac{3}{2}}\left(z_{\frac{1}{2},k}\right)=
3\left(-1\right)^{k-1}\left(k\pi\right)^{r}
\]
and the alternate Bessel zeta function reads
\[
\tilde{\zeta}_{\frac{1}{2}}\left(r\right)
=\frac{1}{3\pi^{r}}\frac{2^{r+2}-8}{2^{r+2}}\zeta\left(r\right).
\]
As a result, we have the reduction
\begin{align*}
4\left(\nu+1\right)\tilde{\zeta}_{\frac{1}{2}}\left(2n\right) & =\frac{1}{\pi^{2n}}\frac{2^{2n-1}-1}{2^{2n-2}}\zeta\left(2n\right).
\end{align*}
Moreover, the Bessel Bernoulli numbers \eqref{Bessel Bernoulli polynomials}
are related to the usual Bernoulli numbers as
\[
\frac{B_{2n,\frac{1}{2}}}{2n!}=\left(-1\right)^{n}2^{2n}\frac{B_{2n}\left(\frac{1}{2}\right)}{2n!}=\left(-1\right)^{n}2^{2n}\frac{ \left(2^{1-2n}-1\right)B_{2n} }{2n!}.
\]
Therefore, in the case $\nu=\frac{1}{2},$ the identity \eqref{eq:BZeta}
reduces to Euler's identity
\[
\frac{B_{2n}}{2n!}=\frac{2\left(-1\right)^{n-1}}{\left(2\pi\right)^{2n}}\zeta\left(2n\right).
\]
\end{proof}

\begin{remark}
The introduction of the alternate Bessel zeta function \eqref{modified zeta} can be justified by the fact that
\begin{equation}
\label{zeta*zetatilde}
\zeta_{B,\nu}^\ast(\{2\}^n) = 4\left( \nu +1 \right) \tilde{\zeta}_{\nu}\left(2n\right),
\end{equation}
obtained by comparing the expression of $\zeta_{B,\nu}^\ast(\{2\}^n)$ in Thm. \ref{Thm8} and in equation \eqref{eq:BZeta}.
The function on the left of \eqref{zeta*zetatilde} is a multiple zeta value, hence a multiple nested sum, while the zeta function on the right is a simple sum; this simplification comes at the price of evaluating the Bessel function $j_{\nu+1}$ at the roots of the contiguous function $j_{\nu}$. 

Moreover, notice that the recurrence on Bessel functions
\[
J_{\nu -1}\left( z \right) +J_{\nu +1}\left( z \right) = \frac{2\nu}{z}J_{\nu}\left( z \right)
\]
implies that 
\[
j_{\nu+1}\left( z_{\nu,k} \right) = -j_{\nu-1}\left( z_{\nu,k} \right),
\]
so that either $j_{\nu+1}$ or $j_{\nu-1}$ can be used in the definition of the alternate Bessel zeta function \eqref{modified zeta}.

Finally, the alternate Bessel zeta function \eqref{modified zeta} appears in the paper by I.N. Sneddon  \cite{Sneddon}, where a recurrence relation and several special values can be found.
\end{remark}

\subsection{A Bessel-Gessel-Viennot identity}\label{besselgessel}
We now begin an in-depth study of the average $S_{B,\nu}(2n,k)$. While the individual zeta functions it sums over do not have closed form expressions, we can deduce closed form expressions for the average. For $\nu = -\frac12$, we deduce pretty results for multiple zeta values built from only odd integers \cite{Hoffman2}. In \cite{Hoffman1}, Hoffman uses generating functions to express the average $S(2n,k)$ in two different ways (in the case $\nu = \frac12$). This yields \cite[(Theorem 1)]{Hoffman1}
\begin{equation}
S\left(2n,k\right)=\sum_{j=0}^{\left\lfloor \frac{k-1}{2}\right\rfloor }\frac{\left(i\pi\right)^{2j}\zeta\left(2n-2j\right)}{2^{2k-2j-2}\left(2j+1\right)!}\binom{2k-2j-1}{k}
,\thinspace\thinspace k \le n,\label{eq:thm1}
\end{equation}
on one side and 
\begin{equation}
S\left(2n,k\right)=\frac{\left(-1\right)^{n-k}}{\left(2n+1\right)!}\sum_{i=0}^{n-k}\binom{n-i}{k}\binom{2n+1}{2i}2^{2i}B_{2i}\left(\frac{1}{2}\right)
,\thinspace\thinspace k \le n,\label{eq:thm1_1}
\end{equation}
on the other side. Equating these two expressions produces the identity
\begin{equation}
\label{Gessel-Viennot range1}
\sum_{i=0}^{\left\lfloor \frac{k-1}{2}\right\rfloor }\binom{2k-2i-1}{k}\binom{2n+1}{2i+1}B_{2n-2i}= \frac{\left(-1\right)^{n-k}}{\left(2n+1\right)!}\sum_{i=0}^{n-k}\binom{n-i}{k}\binom{2n+1}{2i}2^{2i}B_{2i}\left(\frac{1}{2}\right)
,\thinspace\thinspace k \le n,
\end{equation}
a variation of the Gessel-Viennot identity \cite{Gessel Viennot}
\begin{equation}
\label{Gessel-Viennot range2}
\sum_{i=0}^{\left\lfloor \frac{k-1}{2}\right\rfloor }\binom{2k-2i-1}{k}\binom{2n+1}{2i+1}B_{2n-2i}=\frac{2n+1}{2}\binom{2k-2n}{k}
,\thinspace\thinspace k > n,
\end{equation}
that is valid on the complementary range $k\leq n.$

Mirroring Hoffman \cite[(Theorem 1)]{Hoffman1} and using a method inspired by \cite{Ding}, we derive the extension of identities \eqref{eq:thm1} and \eqref{eq:thm1_1} to the Bessel case with an arbitrary value of $\nu$ and deduce a one-parameter generalization of the identity \eqref{Gessel-Viennot range1}. As a byproduct of our proof, we deduce a generalization of the original Gessel-Viennot identity to arbitrary $\nu$.

First we follow Hoffman's approach to derive the following result:
\begin{thm}
The sum $S_{B,\nu}(2n,k)$ can be expressed as 
\begin{equation}
S_{B,\nu}(2n,k) = \sum_{r=k}^n (-1)^{n-k}\binom{r}{k}\frac{ 2^{2n-4r} \Gamma(\nu+1)}{r!(2n-2r)!\Gamma(\nu+r+1)}B_{2n-2r,\nu}\left(\frac12\right),
\,\, k\le n,
\end{equation}
which is equation \eqref{bessel1}, or alternatively as
\begin{equation}
\label{second expression}
S_{B,\nu}(2n,k)=\frac{1}{k!}\sum_{j=0}^{\left\lfloor \frac{k-1}{2}\right\rfloor }
\frac{\left(-1\right)^{j}}{2^{2j}}
\binom{k-1-j}{j}
\frac{\Gamma\left(\nu+k-j\right)}{\Gamma\left(\nu+1+j\right)}
\zeta_{B,\nu}\left(2n-2j\right), \,\, k\le n.
\end{equation}
\end{thm}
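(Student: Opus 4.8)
The first expression is precisely \eqref{bessel1}, which was already established in Theorem~\ref{Thm8} by multiplying the series \eqref{3.2} and \eqref{3.1} and reading off the coefficient of $z^{2n}$; so I focus on the second expression, which I would prove directly, mirroring Hoffman. Specializing the generating function \eqref{2.1c} of Theorem~\ref{thm1} to $a_k=z_{\nu,k}$, $s=2$, setting $t=z^2$, and using the factorization \eqref{factorization} (which carries no exponential correction), the product collapses to
\[
\sum_{n\ge0}\sum_{k=0}^{n}S_{B,\nu}(2n,k)\,y^k z^{2n}=\prod_{i\ge1}\left(1+y\,v_i\right)=\frac{j_\nu\!\left(\sqrt{1-y}\,z\right)}{j_\nu(z)},\qquad v_i:=\frac{z^2}{z_{\nu,i}^2-z^2}.
\]
Thus $G_k(z):=\sum_{n}S_{B,\nu}(2n,k)z^{2n}$ is the elementary symmetric function $e_k(v)$, and the content of \eqref{second expression} is the nonobvious fact that, for each fixed depth $k$, $G_k$ is \emph{affine-linear} in the single Bessel-zeta series $P(z):=\sum_{m\ge1}\zeta_{\nu}(2m)z^{2m}$: all the products of zeta values that Newton's identities would naively generate must collapse.

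The two tools I would use both come from the normalized Bessel equation $j_\nu''+\tfrac{2\nu+1}{z}j_\nu'+j_\nu=0$. First, since $P=-\tfrac{z}{2}\,j_\nu'/j_\nu$, dividing the equation by $j_\nu$ turns it into the Riccati identity
\[
\tfrac{z}{2}P'=P^{2}-\nu P+\tfrac{z^{2}}{4},
\]
which rewrites every derivative of $P$ as a polynomial in $P$ and $z^2$. Second, writing $F=j_\nu(\sqrt{1-y}\,z)/j_\nu(z)$, differentiating twice in $z$ and eliminating $j_\nu$ through its ODE, I obtain the single second-order equation $zF''+(2\nu+1-4P)F'-yzF=0$; extracting the coefficient of $y^k$ gives the depth recurrence
\[
zG_k''+(2\nu+1-4P)G_k'=zG_{k-1},\qquad G_0=1,\quad G_1=P.
\]

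Now I would substitute the ansatz $G_k=R_k(z^2)P+B_k(z^2)$ with $R_k,B_k$ polynomials in $z^2$ and reduce every $P',P''$ by the Riccati identity. The danger is the term $-4PG_k'$, which produces $P^2$ and $P^3$ contributions; the key computation is that these cancel identically against the matching terms coming from $zG_k''$, so the recurrence is consistent with $G_k$ linear in $P$. What survives splits into a $P$-linear part and a $P$-free part, yielding two decoupled recurrences: one for $R_k$, whose solution I must show is $\tfrac1{k!}\sum_{j}\tfrac{(-1)^j}{2^{2j}}\binom{k-1-j}{j}\tfrac{\Gamma(\nu+k-j)}{\Gamma(\nu+1+j)}z^{2j}$, and one for $B_k$; tracking degrees through this recurrence (with $B_1=0$) keeps $\deg_z B_k\le 2k-2$. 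Since the lowest-weight depth-$k$ value has weight $2k$, for every $n\ge k$ we get $[z^{2n}]B_k=0$, and reading off $[z^{2n}]\bigl(R_k(z^2)P(z)\bigr)=\sum_{j}(R_k)_j\,\zeta_\nu(2n-2j)$ reproduces \eqref{second expression} exactly.

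The main obstacle is the pair of inductive claims in the last step: the identical cancellation of the $P^{\ge2}$ terms (the mechanism behind the collapse to single zeta values), and the verification that the recurrence for $R_k$ is solved by the stated binomial--Gamma expression. The binomial coefficient $\binom{k-1-j}{j}$ signals a Chebyshev-type closed form, which I would either guess and verify by induction or read off from a hypergeometric evaluation; as a consistency check, at $\nu=\tfrac12$ (where $\zeta_\nu(2m)=\pi^{-2m}\zeta(2m)$) it must reduce to Hoffman's \eqref{eq:thm1}. Finally, equating this second expression with the first expression \eqref{bessel1} is exactly what produces the one-parameter Gessel--Viennot identity targeted in the rest of the subsection.
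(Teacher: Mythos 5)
Your setup coincides with the paper's (the generating function $\mathcal{F}(t,y)=j_\nu(\sqrt{t(1-y)})/j_\nu(\sqrt{t})$ and the extraction of $G_k$ as the coefficient of $y^k$), but from there you take a genuinely different route. The paper identifies $G_k$ in closed form by two classical facts: the laddering identity \eqref{laddering} gives $G_k(t)=\frac{t^k}{k!}\frac{1}{2^{2k}(\nu+1)_k}\frac{j_{\nu+k}(\sqrt t)}{j_\nu(\sqrt t)}$, and Lommel's reduction-of-order formula \eqref{eq:Lommel} then writes $j_{\nu+k}$ as a Lommel-polynomial combination of $j_{\nu+1}$ and $j_\nu$, so that the explicit formula for the Lommel polynomials immediately yields \eqref{second expression}. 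You instead derive everything from the normalized Bessel ODE: your Riccati identity $\tfrac{z}{2}P'=P^2-\nu P+\tfrac{z^2}{4}$ and the depth recurrence $zG_k''+(2\nu+1-4P)G_k'=zG_{k-1}$ are both correct (I checked them, as well as the claimed identical cancellation of the $P^2$ and $P^3$ terms under the ansatz $G_k=R_kP+B_k$, which does hold), so your strategy is sound. What your route costs is precisely the two inductions you flag as obstacles; what it buys is self-containedness, since you never need to cite Watson. But note that your polynomials $R_k$ \emph{are} the Lommel polynomials in disguise --- up to the normalization $\frac{4}{k!}\left(\frac{\sqrt t}{2}\right)^{k+1}R_{k-1,\nu+1}(\sqrt t)$ --- and your $P$-linear recurrence is a rewriting of their three-term recurrence; recognizing this would let you import the closed form $R_{m,\nu}(z)=\sum_j(-1)^j\binom{m-j}{j}\frac{\Gamma(\nu+m-j)}{\Gamma(\nu+j)}(z/2)^{2j-m}$ instead of verifying it by induction, at which point your argument and the paper's converge. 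Your consistency check at $\nu=\tfrac12$ against Hoffman and the degree bound on $B_k$ (the paper's $B_k$-part has degree $\lfloor k/2\rfloor$ in $t$, comfortably below $n\ge k$) are both correct.
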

\begin{proof}
We only need prove \eqref{second expression}.
Following Hoffman's approach and beginning with the generating product \eqref{2.1d}, we rewrite the generating function
\begin{equation}
\mathcal{F}\left(t,y\right)=\sum_{n=0}^\infty\sum_{k=0}^{n}S_{B,\nu}(2n,k)y^{k}t^{n}=\prod_{k=1}^\infty\frac{\left(1+\frac{\left(y-1\right)t}{z_{\nu,k}^{2}}\right)}{\left(1-\frac{t}{z_{\nu,k}^{2}}\right)}=\frac{j_{\nu}\left(\sqrt{t\left(1-y\right)}\right)}{j_{\nu}\left(\sqrt{t}\right)},
\end{equation}
so that
\begin{equation}
\mathcal{F}\left(t,y\right)=\sum_{k=0}^\infty y^{k}G_{k}\left(t\right)
\end{equation}
with
\begin{equation}
G_{k}\left(t\right)=\frac{1}{j_{\nu}\left(\sqrt{t}\right)}\frac{\left(-t\right)^{k}}{k!}\frac{d^{k}}{dt^{k}}j_{\nu}\left(\sqrt{t}\right) = \frac{t^{k}}{k!}\frac{1}{2^{2k}\left(\nu+1\right)_{k}}\frac{j_{\nu+k}\left(\sqrt{t}\right)}{j_{\nu}\left(\sqrt{t}\right)}.
\end{equation}
This identity is a consequence of the laddering operation \cite[1.10.1.5]{Brychkov}
\begin{equation}\label{laddering}
\frac{d^{k}}{dt^{k}}\frac{J_{\nu}\left(\sqrt{t}\right)}{\left(\sqrt{t}\right)^{\nu}}=\left(-\frac{1}{2}\right)^{k}\frac{J_{\nu+k}\left(\sqrt{t}\right)}{\left(\sqrt{t}\right)^{\nu+k}}, 
\end{equation}
and the definition of $j_\nu$ given in \eqref{eq:j_nu}. 

We can then exploit the reduction of order formula
\begin{equation}
j_{\nu+m}\left(\sqrt{z}\right)=\frac{R_{m-1,\nu+1}\left(\sqrt{z}\right)}{\left(\frac{\sqrt{z}}{2}\right)^{m-1}}\left(\nu+2\right)_{m-1}j_{\nu+1}\left(\sqrt{z}\right)-\frac{R_{m-2,\nu+2}\left(\sqrt{z}\right)}{\left(\frac{\sqrt{z}}{2}\right)^{m}}\left(\nu+1\right)_{m}j_{\nu}\left(\sqrt{z}\right)\label{eq:Lommel},
\end{equation}
where $R_{m,\nu}\left(z\right)$ are the Lommel polynomials \cite[p.294]{Watson}. This is a direct consequence of the corresponding result for Bessel $J_{\nu}$ functions \cite[Eq. 1 p. 295]{Watson} and the normalization \eqref{eq:j_nu}.
Using \eqref{eq:Lommel} gives
\begin{align*}
G_{k}\left(t\right) & =\frac{1}{2^{2k}\left(\nu+1\right)_{k}}\frac{t^{k}}{k!}\left[\frac{R_{k-1,\nu+1}\left(\sqrt{t}\right)}{\left(\frac{\sqrt{t}}{2}\right)^{k-1}}\left(\nu+2\right)_{k-1}\frac{j_{\nu+1}\left(\sqrt{t}\right)}{j_{\nu}\left(\sqrt{t}\right)}-\frac{R_{k-2,\nu+2}\left(\sqrt{t}\right)}{\left(\frac{\sqrt{t}}{2}\right)^{k}}\left(\nu+1\right)_{k}\right]\\
 & =\frac{1}{2^{2k}}\frac{t^{k}}{k!}\left[\frac{R_{k-1,\nu+1}\left(\sqrt{t}\right)}{\left(\frac{\sqrt{t}}{2}\right)^{k-1}}\frac{4}{t}\mathcal{Z}_{\nu}\left(\sqrt{t}\right)-\frac{R_{k-2,\nu+2}\left(\sqrt{t}\right)}{\left(\frac{\sqrt{t}}{2}\right)^{k}}\right].
\end{align*}
Inserting the explicit expression for the Lommel polynomials \cite[Eq. 3 p. 296]{Watson},
\[
R_{m,\nu}\left(z\right)=\sum_{j=0}^{\left\lfloor \frac{m}{2}\right\rfloor }\left(-1\right)^{j}\binom{m-j}{j}\frac{\Gamma\left(\nu+m-j\right)}{\Gamma\left(\nu+j\right)}\left(\frac{z}{2}\right)^{2j-m}
\]
yields
\begin{align*}
G_{k}\left(t\right) 
 & =\frac{1}{k!}\left(\mathcal{Z}_{\nu}\left(t\right)\sum_{j=0}^{\left\lfloor \frac{k-1}{2}\right\rfloor }\left(-1\right)^{j}\binom{k-1-j}{j}\frac{\Gamma\left(k+\nu-j\right)}{\Gamma\left(\nu+1+j\right)}\left(\frac{\sqrt{t}}{2}\right)^{2j}\right)\\
 & -\frac{1}{k!}\left(\sum_{j=0}^{\left\lfloor \frac{k-2}{2}\right\rfloor }\left(-1\right)^{j}\binom{k-2-j}{j}\frac{\Gamma\left(k+\nu-j\right)}{\Gamma\left(\nu+2+j\right)}\left(\frac{\sqrt{t}}{2}\right)^{2j+2}\right).
\end{align*}
We now need to compute the coefficient of $t^{n}$ in this expression; by
definition, this is $S_{B,\nu}\left(2n,k\right).$
The first term is expanded as
\begin{align*}
\mathcal{Z}_{\nu}\left(t\right)\sum_{j=0}^{\left\lfloor \frac{k-1}{2}\right\rfloor }\left(-1\right)^{j}&\binom{k-1-j}{j}\frac{\Gamma\left(k+\nu-j\right)}{\Gamma\left(\nu+1+j\right)}\left(\frac{\sqrt{t}}{2}\right)^{2j}  \\
& =\sum_{q\ge1}\sum_{j=0}^{\left\lfloor \frac{k-1}{2}\right\rfloor }\zeta_{B,\nu}\left(2q\right)t^{q}\left(-1\right)^{j}\binom{k-1-j}{j}\frac{\Gamma\left(k+\nu-j\right)}{\Gamma\left(\nu+1+j\right)}\left(\frac{\sqrt{t}}{2}\right)^{2j}\\
 & =\sum_{n\ge1}t^{n}\sum_{j\ge0}\zeta_{B,\nu}\left(2n-2j\right)\left(-1\right)^{j}\binom{k-1-j}{j}\frac{\Gamma\left(k+\nu-j\right)}{\Gamma\left(\nu+1+j\right)}\frac{1}{2^{2j}},
\end{align*}
so that the coefficient of $t^{n}$ is identified as
\[
\begin{cases}
\sum_{j=0}^{n-1}\zeta_{B,\nu}\left(2n-2j\right)\left(-1\right)^{j}\binom{k-1-j}{j}\frac{\Gamma\left(k+\nu-j\right)}{\Gamma\left(\nu+1+j\right)}\frac{1}{2^{2j}} & 1\le n\le\left\lfloor \frac{k+1}{2}\right\rfloor ,\\
\sum_{j=0}^{\left\lfloor \frac{k-1}{2}\right\rfloor }\zeta_{B,\nu}\left(2n-2j\right)\left(-1\right)^{j}\binom{k-1-j}{j}\frac{\Gamma\left(k+\nu-j\right)}{\Gamma\left(\nu+1+j\right)}\frac{1}{2^{2j}} & n>\left\lfloor \frac{k+1}{2}\right\rfloor .
\end{cases}
\]
In the second term, we identify the coefficient of $t^{n}$ as
\[
\begin{cases}
\left(-1\right)^{n-1}\binom{k-1-n}{n-1}\frac{\Gamma\left(k+\nu-n+1\right)}{\Gamma\left(\nu+1+n\right)}\frac{1}{2^{2n}} & 1\le n\le\left\lfloor \frac{k}{2}\right\rfloor, \\
0 & \text{else.}
\end{cases}
\]
Therefore we obtain the following closed form for the value of $S_{\nu}\left(2n,k\right)$ as the coefficient of $t^{n}$ in $G_{k}\left(t\right)$:
\[
S_{\nu}\left(2n,k\right)=\sum_{j=0}^{\left\lfloor \frac{k-1}{2}\right\rfloor }\zeta_{B,\nu}\left(2n-2j\right)\left(-1\right)^{j}\binom{k-1-j}{j}\frac{\Gamma\left(k+\nu-j\right)}{\Gamma\left(\nu+1+j\right)}\frac{1}{2^{2j}},\thinspace\thinspace n\ge k,
\]
which is the desired result.
\end{proof}
We can also exploit the laddering operation used in the previous proof to find the generating function for Bessel zeta values, which will become useful later. 
\begin{prop}
The generating function for Bessel zeta values is
\begin{equation}
\mathcal{Z}_{\nu}\left(z\right):=\sum_{p=1}^\infty z^{p}\zeta_{B,\nu}\left(2p\right)=\frac{z}{4\left(\nu+1\right)}\frac{j_{\nu+1}\left(\sqrt{z}\right)}{j_{\nu}\left(\sqrt{z}\right)}.
\end{equation}
\end{prop}
\begin{proof}
This is due to the Weierstrass factorization \eqref{factorization}, from which we deduce
\begin{equation}
\log j_{\nu}\left(z\right)=-\sum_{p=1}^\infty\frac{z^{p}}{p}\zeta_{B,\nu}\left(2p\right),
\end{equation}
so that
\begin{equation}
-z\frac{\frac{d}{dz}j_{\nu}\left(\sqrt{z}\right)}{j_{\nu}\left(\sqrt{z}\right)}=\sum_{p=1}^\infty z^{p}\zeta_{B,\nu}\left(2p\right)=\mathcal{Z}_{\nu}\left(z\right).
\end{equation}
Using the laddering rule \eqref{laddering} gives the desired result.
\end{proof}
\begin{remark}

The case $\nu=\frac{1}{2}$ recovers Hoffman's identity \eqref{second expression}, while the case $\nu=-\frac{1}{2}$ yields the identities
\[
S(2n,k)=\frac{\left(-1\right)^{n-k}}{\left(2n\right)!}\sum_{r=k}^{n}\binom{r}{k}\binom{2n}{2r}2^{2n-2r}E_{2n-2r}\left(\frac{1}{2}\right),\,\ n \ge k,
\]
and
\[
S(2n,k)=\frac{(-1)^{n+1}2^{2n-2k+1}}{k(2n)!}\sum_{j=0}^{\left\lfloor \frac{k}{2}-\frac{1}{2}\right\rfloor }\left(2^{2n-2j}-1\right)B_{2n-2j}\binom{-2j+2k-2}{k-1}\binom{2n}{2j},\,\ n \ge k.
\]
Here $E_n\left( x \right)$ are the Euler polynomials defined by \eqref{eulerpoly}.
\end{remark}

\begin{cor}
This proof gives us as a by-product two additional identities; we know that $S_{\nu}\left(2n,k\right)=0$ for $1\le n\le k-1,$ so that
we can deduce
\begin{align}\label{eq:first_lommel}
\sum_{j=0}^{n-1}\zeta_{B,\nu}\left(2n-2j\right)\left(-1\right)^{j}&\binom{k-1-j}{j}\frac{\Gamma\left(k+\nu-j\right)}{\Gamma\left(\nu+1+j\right)}\frac{1}{2^{2j}}  \\
&=\left(-1\right)^{n-1}\binom{k-1-n}{n-1}\frac{\Gamma\left(k+\nu-n+1\right)}{\Gamma\left(\nu+1+n\right)}\frac{1}{2^{2n}}, \thinspace\thinspace1\le n\le\left\lfloor \frac{k}{2}\right\rfloor, \nonumber
\end{align}
and
\begin{equation}
\sum_{j=0}^{\left\lfloor \frac{k-1}{2}\right\rfloor }\zeta_{B,\nu}\left(2n-2j\right)\left(-1\right)^{j}\binom{k-1-j}{j}\frac{\Gamma\left(k+\nu-j\right)}{\Gamma\left(\nu+1+j\right)}\frac{1}{2^{2j}}=0,\thinspace\thinspace\left\lfloor \frac{k+1}{2}\right\rfloor <n\le k-1.\label{eq:second_lommel}
\end{equation}
\end{cor}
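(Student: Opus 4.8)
The plan is to read both identities directly off the coefficient computation already performed in the proof of the preceding theorem, combined with the vanishing of $S_{\nu}(2n,k)$ on the sub-diagonal. First I would record why $S_{\nu}(2n,k)=0$ for $1\le n\le k-1$: by definition $S_{B,\nu}(2n,k)=\sum_{|\textbf{a}|=n}\zeta_{\nu}(2a_1,\ldots,2a_k)$ is a sum over compositions of $n$ into exactly $k$ positive parts, and no such composition exists when $n<k$, so the sum is empty. This vanishing is the single extra input beyond the theorem's proof.

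Next I would return to the identity $S_{\nu}(2n,k)=[t^n]G_k(t)$ together with the explicit decomposition of $G_k(t)$ into a $\mathcal{Z}_{\nu}$-term minus a ``Lommel remainder'' term, both carrying a common harmless factor $\tfrac{1}{k!}$. From that proof, the coefficient of $t^n$ in the $\mathcal{Z}_{\nu}$-term is the $\zeta_{\nu}$-sum $\sum_{j}\zeta_{\nu}(2n-2j)(-1)^{j}\binom{k-1-j}{j}\frac{\Gamma(k+\nu-j)}{\Gamma(\nu+1+j)}2^{-2j}$, whose upper summation limit is $n-1$ when $1\le n\le\lfloor (k+1)/2\rfloor$ and $\lfloor (k-1)/2\rfloor$ when $n>\lfloor (k+1)/2\rfloor$; the coefficient of $t^n$ in the remainder term is $(-1)^{n-1}\binom{k-1-n}{n-1}\frac{\Gamma(k+\nu-n+1)}{\Gamma(\nu+1+n)}2^{-2n}$ for $1\le n\le\lfloor k/2\rfloor$ and $0$ otherwise.

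The two identities then follow by imposing $[t^n]G_k(t)=S_{\nu}(2n,k)=0$ on two complementary sub-ranges of $1\le n\le k-1$, the common factor $\tfrac{1}{k!}$ dropping out. For $1\le n\le\lfloor k/2\rfloor$ both pieces are active: the $\mathcal{Z}_{\nu}$-sum runs up to $n-1$ (since $\lfloor k/2\rfloor\le\lfloor (k+1)/2\rfloor$) and the remainder term is nonzero, so setting their difference to zero and transposing yields exactly \eqref{eq:first_lommel}. For $\lfloor (k+1)/2\rfloor<n\le k-1$ the remainder term vanishes (here $n>\lfloor k/2\rfloor$, consistent with the convention $\binom{k-1-n}{n-1}=0$ when $k<2n$) while the $\mathcal{Z}_{\nu}$-sum is in its truncated form with upper limit $\lfloor (k-1)/2\rfloor$, so the vanishing condition reads off directly as \eqref{eq:second_lommel}.

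The only delicate point, and the closest thing to an obstacle, is the floor-function bookkeeping. I would verify explicitly that $\lfloor k/2\rfloor\le\lfloor (k+1)/2\rfloor$ keeps the remainder term active throughout the first range, that $n>\lfloor (k+1)/2\rfloor$ forces $n>\lfloor k/2\rfloor$ so the remainder is inactive in the second range, and that for odd $k$ the single boundary value $n=\lfloor (k+1)/2\rfloor$ (which the stated ranges deliberately omit) is precisely where the two piecewise forms of the $\mathcal{Z}_{\nu}$-sum agree, so no information is lost. Confirming these disjoint windows against the piecewise coefficient formulas is routine, but it must be carried out to justify the precise limits $1\le n\le\lfloor k/2\rfloor$ and $\lfloor (k+1)/2\rfloor<n\le k-1$.
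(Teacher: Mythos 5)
Your proposal is correct and matches the paper's intent exactly: the corollary is presented there as an immediate by-product of the preceding theorem's coefficient computation, obtained precisely by combining the piecewise formulas for $[t^n]G_k(t)$ with the emptiness of the composition sum defining $S_{\nu}(2n,k)$ when $n<k$. Your careful tracking of the floor-function ranges (and of the omitted boundary case for odd $k$) fills in bookkeeping the paper leaves implicit, but the route is the same.
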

\begin{remark}
The second identity \eqref{eq:second_lommel} can be deduced from the orthogonality property
for Lommel polynomials as studied by Dickinson \cite{Dickinson} and
later rediscovered by Grosjean \cite[Eq. (61)]{Grosjean}. Consider
the discrete measure
\begin{align*}
\rho\left(x\right) & =\sum_{n\ge1}\frac{1}{z_{\nu-1,n}^{2}}\left[\delta\left(x-\frac{1}{z_{\nu-1,n}}\right)+\delta\left(x+\frac{1}{z_{\nu-1,n}}\right)\right].
\end{align*}
Then the Lommel polynomials $\left\{ R_{\nu,n}\left(\frac{1}{x}\right)\right\} $
are orthogonal with respect to this measure, i.e.
\[
\int_{-\infty}^{+\infty}R_{r,\nu}\left(\frac{1}{x}\right)R_{s,\nu}\left(\frac{1}{x}\right)\rho\left(x\right)dx=\frac{\delta_{r,s}}{2\left(\nu+r\right)}.
\]
Choosing $s\in\left[0,r\right]$ and expressing $x^{s}$ as a linear
combination of Lommel polynomials of degrees $\le s$ yields the orthogonality
property
\begin{equation}
\sum_{q=1}^{+\infty}\frac{R_{r,\nu}\left(z_{\nu-1,q}\right)}{z_{\nu-1,q}^{s+2}}=\frac{\Gamma\left(\nu\right)}{2^{r+2}\Gamma\left(\nu+r+1\right)}\delta_{r,s},\thinspace\thinspace0\le s\le r.\label{eq:orthogonality}
\end{equation}
Rewriting the left-hand side of \eqref{eq:second_lommel} by expanding the
zeta function as
\begin{align*}
\sum_{q\ge1}\sum_{j=0}^{\left\lfloor \frac{k-1}{2}\right\rfloor }&\frac{1}{z_{\nu,q}^{2n-2j}}\left(-1\right)^{j}\binom{k-1-j}{j}\frac{\Gamma\left(k+\nu-j\right)}{\Gamma\left(\nu+1+j\right)}\frac{1}{2^{2j}} 
\\ & =\sum_{q\ge1}\frac{1}{z_{\nu,q}^{2n}}\sum_{j=0}^{\left\lfloor \frac{k-1}{2}\right\rfloor }\left(-1\right)^{j}\binom{k-1-j}{j}\frac{\Gamma\left(k+\nu-j\right)}{\Gamma\left(\nu+1+j\right)}\left(\frac{z_{\nu,q}}{2}\right)^{2j} \\
&=\sum_{q\ge1}\frac{R_{k-1,\nu+1}\left(z_{\nu,q}\right)}{z_{\nu,q}^{2n-k+1}}
\end{align*}
shows that this is a special case of \eqref{eq:orthogonality} with $r=k-1$
and $s=2n-k-1;$ since $k\ge n,$ we have $s\le r$ so that $r-s=2k-2n\ne0$
for $\left\lfloor \frac{k}{2}\right\rfloor <n\le k-1$, hence this sum is equal to $0.$ The connection between MZVs and orthogonality relations is extremely unexpected, and deserves further study.
\end{remark}
\begin{remark}
Let us show that, in the case $\nu=\frac{1}{2}$, the first identity \eqref{eq:first_lommel} coincides with the Gessel-Viennot identity \eqref{Gessel-Viennot range2}: we use the relation 
$
\zeta_{B,\frac{1}{2}}\left(2n\right)=\frac{\zeta\left(2n\right)}{\pi^{2n}}=\frac{2^{2n-1}}{\left(2n\right)!}\left(-1\right)^{n-1}B_{2n},
$
and Euler's duplication formula
$
\Gamma\left(2z\right)=\frac{2^{2z-1}}{\sqrt{\pi}}\Gamma\left(z\right)\Gamma\left(z+\frac{1}{2}\right).
$
Substituting $\nu=\frac{1}{2}$ in the left-hand side of \eqref{eq:first_lommel}
gives, after some algebra,
\begin{align*}
\sum_{j=0}^{n-1}\zeta_{B,\frac{1}{2}}\left(2n-2j\right)\left(-1\right)^{j}&\binom{k-1-j}{j}\frac{\Gamma\left(k+\frac{1}{2}-j\right)}{\Gamma\left(\frac{1}{2}+1+j\right)}\frac{1}{2^{2j}}\\
&=\left(-1\right)^{n-1}2^{2n-2k-1}\frac{k!}{\left(2n+1\right)!}\sum_{j=0}^{n-1}B_{2n-2j}\binom{2k-2j-1}{k-2j-1}\binom{2n+1}{2j+1}  .
\end{align*}
The right-hand side of \eqref{eq:first_lommel} simplifies to
\[
\left(-1\right)^{n-1}\binom{k-1-n}{n-1}\frac{\Gamma\left(k+\frac{1}{2}-n+1\right)}{\Gamma\left(\frac{1}{2}+1+n\right)}\frac{1}{2^{2n}}=\left(-1\right)^{n-1}2^{2n-2k}\binom{k-1-n}{n-1}\frac{\left(2k-2n+1\right)!n!}{\left(k-n\right)!\left(2n+1\right)!}.
\]
We deduce
\begin{align*}
\sum_{j=0}^{n-1}B_{2n-2j}\binom{2k-2j-1}{k}\binom{2n+1}{2j+1} & =\frac{1}{2}\binom{k-1-n}{n-1}\frac{\left(2k-2n+1\right)!n!}{\left(k-n\right)!k!}\\
 & =\frac{1}{2}\frac{n\left(2k-2n+1\right)}{k-n}\binom{2k-2n}{k}.
\end{align*}
Adding an extra $j=n$ term in the sum yields
\begin{align*}
\sum_{j=0}^{n}B_{2n-2j}\binom{2k-2j-1}{k}\binom{2n+1}{2j+1}&=\frac{1}{2}\frac{n\left(2k-2n+1\right)}{k-n}\binom{2k-2n}{k}+\binom{2k-2n-1}{k-2n-1} \\
&=\frac{1}{2}\left(2n+1\right)\binom{2k-2n}{k}.
\end{align*}
This coincides with the Gessel-Viennot identity, since for $k>n,$
\[
\sum_{j=0}^{\left\lfloor \frac{k-1}{2}\right\rfloor }B_{2n-2j}\binom{2k-2j-1}{k}\binom{2n+1}{2j+1}=\sum_{j=0}^{n}B_{2n-2j}\binom{2k-2j-1}{k-2j-1}\binom{2n+1}{2j+1}.
\]
\end{remark}

\section{Airy Zeta}\label{section5}
\subsection{Computation of elementary values}
The Airy function has Weierstrass factorization \cite{Flajolet}
\begin{equation}
\Ai\left(z\right)=\Ai\left(0\right)e^{\frac{\Ai'\left(0\right)}{\Ai\left(0\right)}z}\prod_{n\ge1}\left(1-\frac{z}{a_{n}}\right)e^{\frac{z}{a_{n}}}\label{eq:Weierstrass},
\end{equation}
where all the zeros $\left\{ a_{n}\right\} $ are real and negative. We frequently use the constants
$$\Ai\left(0\right)=\frac{1}{3^{\frac{2}{3}}\Gamma\left(\frac{2}{3}\right)},\,\ \Ai'\left(0\right)=-\frac{1}{3^{\frac{1}{3}}\Gamma\left(\frac{1}{3}\right)}.$$
We can now define the standard associated zeta and multiple zeta functions, hereforth denoted by $\zeta_{\Ai}$, and calculate MZVs using standard techniques. Using our product/sum duality, the values $\zeta_{\Ai}(\{2\}^n)$ are obtained from a series expansion for $\Ai(z)\Ai(-z)$ while the values $\zeta_{\Ai}(\{4\}^n)$ are deduced from a series expansion for $\Ai(z)\Ai(-z)\Ai(i z)\Ai(-i z)$.

\begin{thm}\cite{Zhang}
The Airy MZV is equal to
\begin{align}
\zeta_{\Ai}\left(\text{\ensuremath{\left\{  2\right\}}}^{n}\right) & =\frac{1}{12^{\frac{n}{3}}n!\left(\frac{5}{6}\right)_{\frac{n}{3}}}.\label{eq:Ai(2^n)}
\end{align}
\end{thm}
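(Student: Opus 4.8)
The plan is to reduce the multiple zeta value to the Maclaurin coefficients of the single product $\Ai(z)\Ai(-z)$, following the genus-$1$ recipe of Section~\ref{section2}. First I would note that the Airy zeros grow like $|a_n|\sim n^{2/3}$, so $\zeta_{\Ai}(2)$ converges while $\zeta_{\Ai}(1)$ diverges; hence $\Ai$ has genus $p=1$. Normalizing the Weierstrass product \eqref{eq:Weierstrass} by $\Ai(0)e^{(\Ai'(0)/\Ai(0))z}$ produces $\Psi(z)=\prod_{n\ge1}(1-z/a_n)e^{z/a_n}$, and the $p=1$ root-of-unity dissection (with $\omega=-1$) gives $\Psi(z)\Psi(-z)=\prod_{n\ge1}(1-z^2/a_n^2)$. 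The exponential prefactor is annihilated by this symmetrization, so the left side is exactly $\Ai(z)\Ai(-z)/\Ai(0)^2$, whence
\[
\frac{\Ai(z)\Ai(-z)}{\Ai(0)^2}=\prod_{n\ge1}\left(1-\frac{z^2}{a_n^2}\right)=\sum_{n\ge0}(-1)^n\zeta_{\Ai}(\{2\}^n)\,z^{2n}.
\]
The task is now to produce the Taylor coefficients of $\Ai(z)\Ai(-z)$.

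For that I would exploit the Airy equation $\Ai''=z\Ai$. Setting $w(z)=\Ai(z)\Ai(-z)$ and differentiating, the cross terms collapse and one finds the fourth-order equation $zw^{(4)}-w'''+4z^3w=0$. Substituting the series $\sum_n(-1)^n\zeta_{\Ai}(\{2\}^n)z^{2n}$ yields the recurrence
\[
\zeta_{\Ai}(\{2\}^n)=\frac{\zeta_{\Ai}(\{2\}^{n-3})}{2n(2n-1)(n-1)(n-2)},\qquad n\ge3.
\]
A direct Pochhammer computation shows that the claimed $\frac{1}{12^{n/3}n!\,(5/6)_{n/3}}$ obeys this recurrence, its successive ratio being $1/[2n(2n-1)(n-1)(n-2)]$. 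Equivalently, applying the triplication formula to $(3j)!$ identifies the $z^{6j}$-part of the generating function with $\hyp03{-}{\frac13,\frac23,\frac56}{-z^6/324}$, which makes the shape of the answer transparent and explains the appearance of $n/3$ and $5/6$.

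It remains to match the three seeds, one per residue class modulo $3$. Here $\zeta_{\Ai}(\{2\}^0)=1$ is immediate, and from $\Ai''(0)=0$, $\Ai'''(0)=\Ai(0)$, i.e. $\Ai(z)=\Ai(0)+\Ai'(0)z+\frac{\Ai(0)}{6}z^3+\cdots$, I would read off that the $z^2$ and $z^4$ coefficients of $\Ai(z)\Ai(-z)$ equal $-\Ai'(0)^2$ and $-\frac16\Ai(0)\Ai'(0)$, giving $\zeta_{\Ai}(2)=(\Ai'(0)/\Ai(0))^2$ and $\zeta_{\Ai}(\{2\}^2)=-\Ai'(0)/(6\Ai(0))$. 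Substituting $\Ai(0)=3^{-2/3}/\Gamma(\frac23)$ and $\Ai'(0)=-3^{-1/3}/\Gamma(\frac13)$ and invoking the Legendre duplication formula $\Gamma(z)\Gamma(z+\frac12)=2^{1-2z}\sqrt\pi\,\Gamma(2z)$ at $z=\frac16,\frac13$ collapses these into $\frac{1}{12^{1/3}(5/6)_{1/3}}$ and $\frac{1}{12^{2/3}\,2!\,(5/6)_{2/3}}$.

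The main obstacle is precisely this seed-matching, and it is more than bookkeeping: the indicial roots of $zw^{(4)}-w'''+4z^3w=0$ are $0,1,2,4$, so $z^4$ (the $n=2$ coefficient) is an indicial exponent and $\zeta_{\Ai}(\{2\}^2)$ is a genuinely independent initial condition, not something forced by $\zeta_{\Ai}(\{2\}^0)$. Thus the uniformity of the closed form over all $n$ is not automatic; it rests on the Gamma-function coincidences at arguments in $\frac16\Z$ supplied by the duplication and reflection formulae. Once the two nontrivial seeds are reconciled, the recurrence propagates each residue class and the formula follows for every $n$.
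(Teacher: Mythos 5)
Your proposal is correct, and it reaches \eqref{eq:Ai(2^n)} by a genuinely different route at the one step where the argument has real content. The reduction to the Taylor coefficients of $\Ai(z)\Ai(-z)$ via the genus-$1$ Weierstrass normalization and the $\omega=-1$ dissection is exactly what the paper does; the difference lies in how those coefficients are produced. The paper imports Reid's expansion \eqref{eq:series}, which rests on an integral representation for $\Ai(x)\Ai(-x)$, and separately offers a trisection of $\Ai$ into modified Bessel functions that reduces the classes $n\equiv 0,1\pmod 3$ to previously computed Bessel MZVs. You instead derive the fourth-order equation $zw^{(4)}-w'''+4z^3w=0$ for $w=\Ai(z)\Ai(-z)$ directly from $\Ai''=z\Ai$ (this checks out: setting $W=\Ai(z)\,\tfrac{d}{dz}\Ai(-z)-\Ai'(z)\Ai(-z)$ one finds $w'''=2zW$ and $W'=-2zw$, which combine to your equation), extract the three-step recurrence $\zeta_{\Ai}(\{2\}^n)=\zeta_{\Ai}(\{2\}^{n-3})/\left[2n(2n-1)(n-1)(n-2)\right]$, verify that the right-hand side of \eqref{eq:Ai(2^n)} satisfies it, and match the seeds $n=0,1,2$; I confirmed the two nontrivial seeds $\zeta_{\Ai}(2)=\left(\Ai'(0)/\Ai(0)\right)^2$ and $\zeta_{\Ai}(\{2\}^2)=-\Ai'(0)/(6\Ai(0))$ and their reduction to $12^{-1/3}(5/6)_{1/3}^{-1}$ and $12^{-2/3}(2!)^{-1}(5/6)_{2/3}^{-1}$ via duplication and reflection. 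What your route buys is self-containedness: no integral representation, no appeal to Reid or to the Bessel section, only the Airy ODE and elementary Gamma identities. What it costs is precisely what you flag: since $z^0$, $z^2$, $z^4$ all sit at indicial exponents of your equation, the uniformity of the closed form across the three residue classes is not forced by the recurrence and must be checked by hand, whereas Reid's formula delivers all coefficients in one stroke and the paper's Bessel trisection explains structurally why the class $n\equiv 2\pmod 3$ behaves differently from the other two.
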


The usual proof (\cite{Zhang})
uses the series expansion \cite[(3.9)]{Reid}
\begin{equation}
\Ai\left(x\right)\Ai\left(-x\right)=\frac{2}{\sqrt{\pi}}\sum_{n\ge0}\frac{\left(-1\right)^{n}x^{2n}}{12^{\frac{2n+5}{6}}n!\Gamma\left(\frac{2n+5}{6}\right)},\label{eq:series}
\end{equation}
deduced by Reid as a consequence of the integral representation
\[
\Ai\left(x\right)\Ai\left(-x\right)=\frac{1}{\pi2^{\frac{1}{3}}}\int_{-\infty}^{+\infty}\Ai\left(2^{-\frac{4}{3}}t^{2}\right)\cos\left(xt\right)dt.
\]
Then the Weierstrass factorization \eqref{eq:Weierstrass} allows us to deduce the generating function of $\zeta_{\Ai}\left(\text{\ensuremath{\left\{  2\right\} } }^{n}\right)$ as
\begin{align*}
\sum_{n=0}^\infty \zeta_{\Ai}\left(\text{\ensuremath{\left\{  2\right\}}}^{n}\right)z^{n} & =\prod_{n\ge1}\left(1+\frac{z}{a_{n}^{2}}\right)=\frac{\Ai\left(i\sqrt{z}\right)\Ai\left(-i\sqrt{z}\right)}{\Ai\left(0\right)^{2}}.
\end{align*}
Finally, notice that $\zeta_{\Ai}\left(\left\{ 2\right\}^{0}\right)=1$ to obtain the desired result.


We propose here another approach to this result that does not require Reid's expansion
and reveals a surprising link with results from the previous section on Bessel MZVs. Looking at $\zeta_{\Ai}\left(\left\{2\right\}^{3n}\right)$ in \eqref{eq:Ai(2^n)},
we recognize, up to a factor $\left(\frac{3}{2}\right)^{4n},$ the
value of $\zeta_{B,\nu=-\frac{1}{3}}\left(\left\{  4\right\}^{n}\right)$
computed previously for the Bessel zeta function; this is not a coincidence.
\begin{thm}
The Airy MZV and the Bessel MZV are related by
\begin{equation}
\zeta_{\Ai}\left(\left\{ 2\right\} ^{3n}\right)=\left(\frac{2}{3}\right)^{4n}\zeta_{-\frac{1}{3}}\left(\left\{ 4\right\} ^{n}\right)
\end{equation}
and
\begin{equation}
\frac{\zeta_{\Ai}\left(\left\{ 2\right\} ^{3n+1}\right)}{\zeta_{\Ai}\left(\left\{ 2\right\} \right)}=\left(\frac{2}{3}\right)^{4n}\zeta_{\frac{1}{3}}\left(\left\{ 4\right\} ^{n}\right).
\end{equation}
\end{thm}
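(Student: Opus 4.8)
The plan is to treat this as a direct verification, since both closed forms involved are already in hand: the Airy evaluation $\zeta_{\Ai}(\{2\}^n)=\frac{1}{12^{n/3}n!(5/6)_{n/3}}$ from \eqref{eq:Ai(2^n)}, and the Bessel evaluation $\zeta_{B,\nu}(\{4\}^n)=\frac{1}{2^{4n}n!(\nu+1)_{2n}(\nu+1)_n}$. The whole argument reduces to reconciling these two rational-in-$n$ expressions, and the only tools needed are the classical Pochhammer multiplication formulas: the cubing identity $(3n)!=27^n\,n!\,(\tfrac13)_n(\tfrac23)_n$ (Gauss multiplication at $m=3$) and the Legendre duplication $(a)_{2n}=4^n(\tfrac a2)_n(\tfrac{a+1}{2})_n$. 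The \emph{conceptual} reason this is ``not a coincidence'' is that both functions carry order-$\tfrac13$ Bessel structure, but for a proof I would simply match the formulas.

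For the first identity I would set $n\mapsto 3n$ in the Airy formula, giving $\zeta_{\Ai}(\{2\}^{3n})=\frac{1}{12^n(3n)!(5/6)_n}$, and specialize $\nu=-\tfrac13$ in the Bessel formula so that $(\tfrac23)^{4n}\zeta_{-\frac13}(\{4\}^n)=\frac{1}{3^{4n}n!(2/3)_{2n}(2/3)_n}$. Applying the cubing identity to $(3n)!$ and the duplication $(\tfrac23)_{2n}=4^n(\tfrac13)_n(\tfrac56)_n$ collapses both denominators to the common shape $324^n\,n!\,(\tfrac13)_n(\tfrac23)_n(\tfrac56)_n$, using $12^n\cdot 27^n=324^n=81^n\cdot 4^n$; equality is then immediate with no residual constant.

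For the second identity the extra subtlety is the non-integer Pochhammer index, since $\zeta_{\Ai}(\{2\}^{3n+1})$ involves $(\tfrac56)_{n+1/3}=\Gamma(n+\tfrac76)/\Gamma(\tfrac56)$. I would first form the ratio $\zeta_{\Ai}(\{2\}^{3n+1})/\zeta_{\Ai}(\{2\})$, in which the awkward constants $\Gamma(\tfrac56)$ and $12^{1/3}$ cancel and the fractional index reorganizes into a genuine Pochhammer in $n$, leaving $\frac{1}{12^n(3n+1)!(7/6)_n}$. On the Bessel side, setting $\nu=\tfrac13$ gives $(\tfrac23)^{4n}\zeta_{\frac13}(\{4\}^n)=\frac{1}{3^{4n}n!(4/3)_{2n}(4/3)_n}$; the duplication $(\tfrac43)_{2n}=4^n(\tfrac23)_n(\tfrac76)_n$, the cubing identity applied to $(3n+1)!=(3n+1)(3n)!$, and the one-step shift $(\tfrac43)_n=(3n+1)(\tfrac13)_n$ again reduce both sides to one common form. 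The main obstacle is precisely this $3n+1$ bookkeeping: one must convert the generalized Pochhammer $(\tfrac56)_{n/3}$ evaluated at a non-multiple of $3$ into an ordinary Pochhammer in $n$, and then check that all powers of $2$ and $3$ together with every Pochhammer factor cancel exactly. Taking the ratio against $\zeta_{\Ai}(\{2\})$ is the device that keeps this clean and, crucially, avoids a leftover transcendental Gamma constant that the naive side-by-side comparison would otherwise force one to reduce via the multiplication formulas for the Gamma constants themselves.
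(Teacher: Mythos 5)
Your verification is correct --- I checked both Pochhammer reductions ($12^n\cdot 27^n=324^n=3^{4n}\cdot 4^n$, the duplication $(\tfrac23)_{2n}=4^n(\tfrac13)_n(\tfrac56)_n$ and $(\tfrac43)_{2n}=4^n(\tfrac23)_n(\tfrac76)_n$, and the triplication of $(3n)!$ and $(3n+1)!$), and both sides do collapse to $\bigl(324^n\,n!\,(\tfrac13)_n(\tfrac23)_n(\tfrac56)_n\bigr)^{-1}$ and $\bigl(324^n\,n!\,(\tfrac23)_n(\tfrac43)_n(\tfrac76)_n\bigr)^{-1}$ respectively, with the ratio against $\zeta_{\Ai}(\{2\})$ neatly disposing of $\Gamma(\tfrac56)$ and $12^{1/3}$. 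However, your route is genuinely different from the paper's, and in a way that matters for the logic of the section. The paper does \emph{not} compare closed forms: it decomposes $\Ai(z)=\frac{\sqrt z}{3}\bigl(I_{-1/3}(\xi)-I_{1/3}(\xi)\bigr)$, observes that the Taylor coefficients $a_{3n}$, $a_{3n+1}$ of $\Ai$ are carried separately by $I_{-1/3}$ and $I_{1/3}$ (with $a_{3n+2}=0$), and trisects the product $\Ai(z)\Ai(-z)$ to obtain the generating identities $\sum_n(-1)^n\zeta_{\Ai}(\{2\}^{3n})(\tfrac{3z}{2})^{4n}=j_{-1/3}(\imath z)j_{-1/3}(z)=\prod_k(1-z^4/z_{k,-1/3}^4)$ and its $\nu=\tfrac13$ analogue, which are exactly the Weierstrass generating products for $\zeta_{\pm 1/3}(\{4\}^n)$. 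The point of that structural argument is that it is \emph{independent} of the closed form \eqref{eq:Ai(2^n)} --- the paper explicitly offers this theorem as an alternative derivation of \eqref{eq:Ai(2^n)} that bypasses Reid's integral representation. Your proof consumes \eqref{eq:Ai(2^n)} as an input, so while it is logically valid as the paper is written (that evaluation is a cited theorem of Zhang), it inverts the intended direction of inference and could not replace the paper's proof without forfeiting the section's stated purpose. What your approach buys in exchange is brevity and the explicit identification of the common normal form of both sides, which makes the numerology ($324^n$, the sextuple of Pochhammer parameters) transparent.
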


\begin{proof}
We begin by studying the decomposition
\[
\Ai\left(z\right)=\frac{\sqrt{z}}{3}\left(I_{-\frac{1}{3}}\left(\xi\right)-I_{\frac{1}{3}}\left(\xi\right)\right),
\]
with $\xi=\frac{2}{3}z^{\frac{3}{2}}$. The Airy function is an entire
function 
\[
\Ai\left(z\right)=\sum_{n\ge0}a_{n}z^{n}
\]
such that every third term vanishes, i.e. $a_{3n+2}=0.$ The coefficients $a_{3n}$ are provided
by the term $\frac{\sqrt{z}}{3}I_{-\frac{1}{3}}\left(\xi\right)$,
while the coefficients $a_{3n+1}$ arise from $\frac{\sqrt{z}}{3}I_{\frac{1}{3}}\left(\xi\right)$. Similarly, the two terms in the
expansion
\[
\Ai\left(-z\right)=\frac{\sqrt{z}}{3}\left(J_{-\frac{1}{3}}\left(\xi\right)-J_{\frac{1}{3}}\left(\xi\right)\right)
\]
provide the coefficients $a_{3n}$ and $a_{3n+1}$ respectively. When multiplying $\Ai\left(z\right)$
by $\Ai\left(-z\right),$ the $z^{3n}$ terms can only arise through a contribution
from the first term in $\Ai\left(z\right)$ and the first in $\Ai\left(-z\right)$, and so on. 
In terms of the normalized Bessel functions
\[
j_{\nu}\left(z\right)=2^{\nu}\Gamma\left(\nu+1\right)\frac{J_{\nu}\left(z\right)}{z^{\nu}},\thinspace\thinspace i_{\nu}\left(z\right)=2^{\nu}\Gamma\left(\nu+1\right)\frac{I_{\nu}\left(z\right)}{z^{\nu}},
\]
remarking that
$i_{\nu}\left(z\right)=j_{\nu}\left(i z\right),$
we deduce \footnote{This simplification does not happen in the case of terms congruent to $2 \pmod{3}$ and we obtain the surprising expansion
\[
\sum_{n\ge0}\zeta_{\Ai}\left(\text{\ensuremath{\left\{  2\right\} } }^{3n+2}\right)z^{6n}=\frac{\Gamma^{2}\left(\frac{2}{3}\right)}{\left(4\pi\right)3^{\frac{1}{6}}}\thinspace\thinspace_{0}F_{3}\left(\begin{array}{c}
-\\
\frac{8}{6},\frac{9}{6},\frac{10}{6}
\end{array};\frac{z^{6}}{324}\right).
\]
}

\begin{align*}
\sum_{n\ge0}\left(-1\right)^{n}\zeta_{\Ai}\left(\text{\ensuremath{\left\{  2\right\} } }^{3n}\right)\left(\frac{3z}{2}\right)^{4n} & =j_{-\frac{1}{3}}\left(i z\right)j_{-\frac{1}{3}}\left(z\right)
\end{align*}
and
\begin{align*}
\sum_{n\ge0}\left(-1\right)^{n}\frac{\zeta_{\Ai}\left(\text{\ensuremath{\left\{  2\right\} } }^{3n+1}\right)}{\zeta_{\Ai}\left(\text{\ensuremath{\left\{  2\right\} } }\right)}\left(\frac{3z}{2}\right)^{4n} & =j_{\frac{1}{3}}\left(i z\right)j_{\frac{1}{3}}\left(z\right).
\end{align*}

Both products on the right hand sides have a Weierstrass factorization, namely
\[
j_{-\frac{1}{3}}\left(i z\right)j_{-\frac{1}{3}}\left(z\right)=\prod_{k\ge1}\left(1-\frac{z^{4}}{z_{k,-\frac{1}{3}}^{4}}\right)
\]
and
\[
j_{\frac{1}{3}}\left(i z\right)j_{\frac{1}{3}}\left(z\right)=\prod_{k\ge1}\left(1-\frac{z^{4}}{z_{k,\frac{1}{3}}^{4}}\right).
\]
These factorizations are generating functions for Bessel multiple
zeta values, so that we deduce the result.
\end{proof}

More generally, we can apply this multisection technique whenever the function we are considering can be expressed as a linear combination of other simple functions symmetrical enough that they have zero series coefficients for certain congruence classes. 
For instance, this may allow us to study the Bessel $Y$ or $K$ functions, or the Hankel functions, which are all linear combinations of other Bessel functions. 

These techniques both carry over into the study of the derivative of the Airy function.
\begin{thm}
We have 
\[
\zeta_{\Ai'}\left(\left\{ 2\right\} ^{n}\right)=\frac{1}{n!}\frac{1}{2^{\frac{2n+1}{3}}3^{\frac{2n-3}{6}}}\frac{\Gamma\left(\frac{1}{3}\right)^{2}}{\Gamma\left(\frac{1}{2}\right)\Gamma\left(\frac{2n+1}{6}\right)}.
\]
\end{thm}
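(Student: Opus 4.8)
The plan is to mimic the first proof of the $\zeta_{\Ai}(\{2\}^n)$ theorem, replacing $\Ai$ by $\Ai'$ throughout. First I would record the Weierstrass factorization of $\Ai'$. Since $\Ai'$ is entire of order $\tfrac32$ and genus $1$ with all its zeros $\{a'_n\}$ real and negative, it has a representation of the same shape as \eqref{eq:Weierstrass},
\[
\Ai'(z) = \Ai'(0)\, e^{\frac{\Ai''(0)}{\Ai'(0)}z}\prod_{n\ge1}\left(1-\frac{z}{a'_n}\right)e^{\frac{z}{a'_n}}.
\]
The crucial simplification is that the Airy equation $\Ai''(z)=z\Ai(z)$ forces $\Ai''(0)=0$, so the linear exponential prefactor disappears. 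Writing $\Psi(z)=\Ai'(z)/\Ai'(0)$ and forming $\Psi(z)\Psi(-z)$ as in Section \ref{section2} then gives the generating function
\[
\frac{\Ai'(z)\Ai'(-z)}{\Ai'(0)^2}=\prod_{n\ge1}\left(1-\frac{z^2}{(a'_n)^2}\right)=\sum_{n\ge0}(-1)^n\zeta_{\Ai'}(\{2\}^n)\,z^{2n},
\]
using Lemma \ref{lemma1} with $s=2$. Thus the whole problem reduces to finding the Taylor series of $\Ai'(z)\Ai'(-z)$.

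The key step, and the one genuinely special to this function, is the identity
\[
\Ai'(z)\Ai'(-z) = -\tfrac12\,\frac{d^2}{dz^2}\bigl(\Ai(z)\Ai(-z)\bigr).
\]
I would prove this by differentiating $f(z):=\Ai(z)\Ai(-z)$ twice and invoking the Airy equation: in $f''$ the two second-derivative contributions produce $z\Ai(z)\Ai(-z)$ and $-z\Ai(z)\Ai(-z)$, which cancel, leaving exactly $-2\Ai'(z)\Ai'(-z)$ from the cross terms. This reduces the computation to Reid's already-known expansion \eqref{eq:series} for $\Ai(z)\Ai(-z)$, which is the main reason for choosing this route over a direct Bessel multisection of $\Ai'$ through $I_{\pm 2/3}$.

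Finally I would substitute Reid's series into the differential relation, differentiate term by term, and shift the summation index by one. After using $\Gamma\!\left(\tfrac{2n+7}{6}\right)=\tfrac{2n+1}{6}\Gamma\!\left(\tfrac{2n+1}{6}\right)$ to cancel the factor $2n+1$ produced by the differentiation, reading off the coefficient of $z^{2n}$ and dividing by $\Ai'(0)^2=\bigl(3^{1/3}\Gamma(\tfrac13)\bigr)^{-2}$ yields
\[
\zeta_{\Ai'}(\{2\}^n)=\frac{3^{2/3}\Gamma(\tfrac13)^2}{\Gamma(\tfrac12)}\cdot\frac{12}{12^{(2n+7)/6}\,n!\,\Gamma(\tfrac{2n+1}{6})},
\]
with $\sqrt\pi=\Gamma(\tfrac12)$. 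Writing $12=2^2\cdot 3$ and collecting the powers of $2$ and $3$ (the exponents combine to $2^{-(2n+1)/3}$ and $3^{-(2n-3)/6}$) gives the stated closed form. The only real obstacle is bookkeeping: justifying the genus-one factorization and the reality and negativity of the zeros of $\Ai'$ so that the sum/product dictionary of Section \ref{section2} applies, and then keeping the fractional powers of $2$ and $3$ and the shifted Gamma arguments straight. The conceptual content is entirely in the differential identity above.
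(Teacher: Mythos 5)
Your proposal is correct and follows essentially the same route as the paper: both rest on the identity $\Ai'(z)\Ai'(-z)=-\tfrac12\frac{d^2}{dz^2}\bigl(\Ai(z)\Ai(-z)\bigr)$, obtained from the Airy equation $\Ai''(z)=z\Ai(z)$, combined with Reid's expansion \eqref{eq:series}. The only difference is that you spell out the normalization of $\Ai'$ via its genus-one Weierstrass factorization (using $\Ai''(0)=0$) and the final Gamma-function bookkeeping, details the paper leaves implicit.
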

\begin{proof}
The product $\Ai'\left(z\right)\Ai'\left(-z\right)$ is equal to
\[
\Ai'\left(z\right)\Ai'\left(-z\right)=-\frac{1}{2}\frac{d^{2}}{dz^{2}}\left[\Ai\left(z\right)\Ai\left(-z\right)\right],
\]
since we can calculate
\[
\frac{d^{2}}{dz^{2}}\left[\Ai\left(z\right)\Ai\left(-z\right)\right]=\Ai''\left(z\right)\Ai\left(-z\right)-2\Ai'\left(z\right)\Ai'\left(-z\right)+\Ai\left(z\right)\Ai''\left(-z\right),
\]
and use the fact that $\Ai''\left(z\right)=z\Ai\left(z\right)$. This can then be used in conjunction with the series expansion \eqref{eq:series} to derive the given result.
\end{proof}
As with the previous multisection technique, we could have started with the expansion
\[
\Ai'\left(z\right)=\frac{z}{3}\left(I_{\frac{2}{3}}\left(\frac{2}{3}z^{\frac{3}{2}}\right)-I_{-\frac{2}{3}}\left(\frac{2}{3}z^{\frac{3}{2}}\right)\right)
\]
and studied different dissections modulo $3$ to obtain the same result.

A closed form for the higher-order multiple values $\zeta_{\Ai}\left(\left\{ 2p \right\} ^{n}\right)$ with $p>2$ seems out of reach; however, we were able to obtain the following result.

\begin{thm}
\label{thm:zeta4n}
The values of $\zeta_{\Ai}\left(\left\{ 4 \right\} ^{n}\right)$ can be computed as the convolution 
\begin{equation}
\label{zetaAi4n}
\zeta_{\Ai}\left(\left\{ 4 \right\}^{n}\right) = 
\frac{\left( -1 \right)^{n}}{12^{\frac{2n}{3}}2n!}
\sum _{k=0}^{2 n} 
\binom{2n}{k}
\frac{(-1)^k}{\left(\frac{5}{6}\right)_{\frac{k}{3}}  
\left(\frac{5}{6}\right)_{\frac{2n-k}{3}}}
\end{equation}
and are equal to
\begin{align*}
\zeta_{\Ai}\left(\left\{ 4 \right\}^{n}\right)&=
\frac{\left( -1 \right)^n}{12^{\frac{2n}{3}}\left( 2n \right)!}
\Gamma\left(\frac{5}{6}\right)
\frac{\pFq{4}{3}
{\frac{1}{6}-\frac{2n}{3},
\frac{1}{3}-\frac{2n}{3},
-\frac{2n}{3},
\frac{2}{3}-\frac{2n}{3}}
{\frac{1}{3},\frac{2}{3},\frac{5}{6}}{-1}}{\Gamma\left(\frac{2n}{3}+\frac{5}{6}\right)}\\
&
-\frac{6\left( -1 \right)^n}{12^{\frac{2n}{3}}\left( 2n-1 \right)!}\Gamma^{2}\left(\frac{5}{6}\right)
\frac{\pFq{4}{3}
{\frac{1}{3}-\frac{2n}{3},\frac{1}{2}-\frac{2n}{3},\frac{2}{3}-\frac{2n}{3},1-\frac{2n}{3}}
{\frac{2}{3},\frac{7}{6},\frac{4}{3}}{-1}}
{\Gamma\left(\frac{1}{6}\right)\Gamma\left(\frac{2n}{3}+\frac{1}{2}\right)}\\
&
+\frac{\left( -1 \right)^n}{12^{\frac{2n}{3}}\left( 2n-2 \right)!}\Gamma^{2}\left(\frac{5}{6}\right)
\frac{\pFq{4}{3}
{\frac{2}{3}-\frac{2n}{3},\frac{5}{6}-\frac{2n}{3},\frac{4}{3}-\frac{2n}{3},1-\frac{2n}{3}}
{\frac{4}{3},\frac{3}{2},\frac{5}{3}}
{-1}}
{\Gamma\left( \frac{1}{2} \right)\Gamma\left(\frac{2n}{3}+\frac{1}{6}\right)}.
\end{align*}
\end{thm}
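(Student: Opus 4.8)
The plan is to reduce the computation of $\zeta_{\Ai}(\{4\}^n)$ to the already-known diagonal values $\zeta_{\Ai}(\{2\}^m)$ of \eqref{eq:Ai(2^n)}, and then resolve the resulting finite convolution into hypergeometric pieces by a dissection modulo $3$. First I would take the Weierstrass factorization \eqref{eq:Weierstrass}, which after normalizing by $\Ai(0)e^{(\Ai'(0)/\Ai(0))z}$ gives the generating product $\sum_{n\ge0}\zeta_{\Ai}(\{4\}^n)z^{n}=\prod_{k\ge1}(1+z/a_{k}^{4})$. Applying the $m=s=2$ instance of Theorem \ref{dissectionthm} — that is, factoring $1-t^{2}/a_{k}^{4}=(1-t/a_{k}^{2})(1+t/a_{k}^{2})$ and comparing coefficients of $t^{2n}$ — expresses $\zeta_{\Ai}(\{4\}^n)$ as a Cauchy convolution of $\zeta_{\Ai}(\{2\}^{k})$ and $\zeta_{\Ai}(\{2\}^{2n-k})$, which becomes \eqref{zetaAi4n} once the closed form \eqref{eq:Ai(2^n)} is substituted. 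One must be attentive to the sign $(-1)^{k}$ supplied by the $(1-t/a_{k}^{2})$ factor and to the power of $12$ that survives the convolution, since the factor $12^{k/3}12^{(2n-k)/3}=12^{2n/3}$ is constant in $k$.

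The second and main stage is to evaluate the convolution. The key observation is that the summand depends on $k$ only through $k\bmod 3$: the Pochhammer $(5/6)_{k/3}$ has an integer increment exactly when $3\mid k$, and otherwise its fractional shift is fixed by the residue of $k$. I would therefore split $\sum_{k=0}^{2n}$ into the three progressions $k=3j+r$, $r\in\{0,1,2\}$, and reindex each by $j$. In each progression the ascending objects $k!$ and $(5/6)_{k/3}$ become honest Pochhammer symbols in $j$ (using the triplication form of Gauss's multiplication formula on $(3j+r)!$), while the two \emph{descending} objects $(2n-k)!$ and $(5/6)_{(2n-k)/3}$ must be converted to ascending Pochhammers by the reflection identity $(a)_{M-j}=(-1)^{j}(a)_{M}/(1-a-M)_{j}$. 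After factoring out the $j=0$ term, the term ratio becomes a ratio of four linear factors over four linear factors times $-1$ (with one denominator factor equal to $(j+1)$ in each class), so each progression collapses to a terminating ${}_{4}F_{3}$ at $-1$. Tracking the parameters shows that $r=2,1,0$ produce respectively the first, second, and third ${}_{4}F_{3}$ of the statement, with lower data $\{4/3,3/2,5/3\}$, $\{2/3,7/6,4/3\}$, $\{1/3,2/3,5/6\}$.

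The last step is to read off the three prefactors, each being the $j=0$ contribution of its progression, namely
\[
\frac{(-1)^{n+r}}{12^{2n/3}\,r!\,(2n-r)!\,(5/6)_{r/3}\,(5/6)_{(2n-r)/3}},
\]
which I would rewrite purely through $\Gamma$-functions via $(5/6)_{m/3}=\Gamma(m/3+5/6)/\Gamma(5/6)$ together with $\Gamma(7/6)=\tfrac16\Gamma(1/6)$ and $\Gamma(3/2)=\tfrac12\sqrt{\pi}$. This is exactly what produces the factors $\Gamma(2n-1),\Gamma(2n),\Gamma(2n+1)$ and the assorted $\Gamma(5/6)$, $\Gamma(1/6)$, $\sqrt{\pi}$ appearing in the three terms.

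I expect the only genuine obstacle to be the bookkeeping in the reflection step: the descending quantities $(2n-k)!$ and $(5/6)_{(2n-k)/3}$ each contribute their own sign $(-1)^{j}$ and their own shifted parameters, and it is easy to misplace a power of $12$ (and the global sign $(-1)^{n}$) when recombining the $k!$-triplication with these reflections. To guard against this I would fix the normalization on the case $n=1$, where two of the three upper parameters in each ${}_{4}F_{3}$ vanish so that every hypergeometric factor truncates to $1$, and the identity must collapse to the Newton relation $\zeta_{\Ai}(\{4\})=\zeta_{\Ai}(\{2\})^{2}-2\,\zeta_{\Ai}(\{2\}^{2})$; this pins down the overall sign and the power of $12$ unambiguously.
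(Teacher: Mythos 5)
Your proposal follows essentially the same route as the paper: the convolution \eqref{zetaAi4n} comes from the dissection $\prod_k(1-t^2/a_k^4)=\prod_k(1-t/a_k^2)\prod_k(1+t/a_k^2)$ combined with the closed form \eqref{eq:Ai(2^n)}, and the three ${}_4F_3$ terms arise from splitting the sum over $k$ by residue class modulo $3$ --- exactly the outline the paper gives. Your additional bookkeeping (the reflection of the descending Pochhammers, the $n=1$ consistency check, and the observation that the surviving power of $12$ is $12^{2n/3}$ rather than the $12^{2n}$ printed in \eqref{zetaAi4n}) is consistent with, and somewhat more careful than, the paper's sketch.
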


\begin{proof}
We give only an outline of the proof here: divide the sum \eqref{zetaAi4n} into three parts according to the residue class of $k$ modulo 3: this gives three terms which can be checked to coincide with those of the result.
For example, keeping only terms divisible by $3$ in \eqref{zetaAi4n} gives
\[
\frac{\left( -1 \right)^{n}}{12^{\frac{2n}{3}}2n!}
\sum _{k=0}^{2 n} 
\binom{2n}{3k}
\frac{(-1)^{3k}}{\left(\frac{5}{6}\right)_{\frac{3k}{3}}  
\left(\frac{5}{6}\right)_{\frac{2n-3k}{3}}}
= 
\frac{\left( -1 \right)^n}{12^{\frac{2n}{3}}\left( 2n \right)!}
\Gamma\left(\frac{5}{6}\right)
\frac{\pFq{4}{3}
{\frac{1}{6}-\frac{2n}{3},
\frac{1}{3}-\frac{2n}{3},
-\frac{2n}{3},
\frac{2}{3}-\frac{2n}{3}}
{\frac{1}{3},\frac{2}{3},\frac{5}{6}}{-1}}{\Gamma\left(\frac{2n}{3}+\frac{5}{6}\right)},
\]
as can be checked by standard summation techniques.
\end{proof}

\subsection{Airy Bernoulli numbers}
Similarly to the case of the Bessel zeta function, we introduce Airy Bernoulli numbers in order to compute the Airy multiple zeta star values. Although these numbers do not have a closed form expression, they satisfy a simple recurrence identity that allows their effective numerical computation. 

Define the Airy Bernoulli numbers $\mathcal{B}_n$ by the generating function
\begin{equation}
\sum_{n\ge0}\frac{\mathcal{B}_n}{n!}z^{n}=\frac{\Ai\left(0\right)}{\Ai\left(z\right)}.\label{eq:Bernoulli generating}
\end{equation}
For example,
\[
\mathcal{B}_{0}=1,\thinspace\thinspace \mathcal{B}_{1}=3^{\frac{1}{3}}\frac{\Gamma\left(\frac{2}{3}\right)}{\Gamma\left(\frac{1}{3}\right)}=-\frac{\Ai'\left(0\right)}{\Ai\left(0\right)},\thinspace\thinspace\frac{\mathcal{B}_{2}}{2!}=\mathcal{B}_{1}^{2}=3^{\frac{2}{3}}\frac{\Gamma^{2}\left(\frac{2}{3}\right)}{\Gamma^{2}\left(\frac{1}{3}\right)},
\]
\[
\frac{\mathcal{B}_{3}}{3!}=-\frac{1}{6}+3\frac{\Gamma^{3}\left(\frac{2}{3}\right)}{\Gamma^{3}\left(\frac{1}{3}\right)},\thinspace\thinspace\frac{\mathcal{B}_{4}}{4!}=-\frac{3^{\frac{1}{3}}}{4}\frac{\Gamma\left(\frac{2}{3}\right)\left[\Gamma^{3}\left(\frac{2}{3}\right)-12\Gamma^{3}\left(\frac{1}{3}\right)\right]}{\Gamma^{4}\left(\frac{1}{3}\right)}
\]
and so on. This parallels the introduction of Bessel-Bernoulli numbers as coefficients in the series expansion of the reciprocal Bessel 
function \cite{Frappier}. We have the following recurrences to compute these numbers and then the Airy zeta function:
\begin{thm}
Define the sequence
\[
a_{3n}=\frac{\left(-1\right)^{n}\left( 3n \right)!}{3^{2n}n!\left(\frac{2}{3}\right)_{n}},
\thinspace\thinspace a_{3n+1}=\frac{\left(-1\right)^{n+1}\Gamma\left(\frac{2}{3}\right)\left(3n+1\right)!}{3^{2n+\frac{2}{3}}n!\Gamma\left(n+\frac{4}{3}\right)},
\thinspace\thinspace a_{3n+2}=0.
\]
The Airy Bernoulli numbers satisfy the linear recurrence
\begin{equation}
\sum_{k=0}^{n}\binom{n}{k}\mathcal{B}_{k}a_{n-k}=n!\delta_{n},
\nonumber
\end{equation}
so that 
\begin{equation}
\mathcal{B}_{n}=\begin{cases}
-\sum_{k=0}^{n-1}\binom{n}{k}\mathcal{B}_{k}a_{n-k}, & n>0\\
1 & n=0
\end{cases}.
\label{eq:recurrence1}
\end{equation}
We then have the linear recurrence between the Airy Bernoulli numbers and the Airy zeta function
\begin{equation}
\frac{\mathcal{B}_{n+1}}{n!}=-\frac{\Ai'\left(0\right)}{\Ai\left(0\right)}\frac{\mathcal{B}_{n}}{n!}+\sum_{r=0}^{n-1}\frac{\mathcal{B}_{r}}{r!}\zeta_{\Ai}\left(n+1-r\right),\thinspace\thinspace n\ge1,
\nonumber
\end{equation}
so that
\begin{equation}
\zeta_{\Ai}\left(n+1\right)=\frac{\Ai'\left(0\right)}{\Ai\left(0\right)}\frac{\mathcal{B}_{n}}{n!}+\frac{\mathcal{B}_{n+1}}{n!}-\sum_{r=1}^{n-1}\frac{\mathcal{B}_{r}}{r!}\zeta_{\Ai}\left(n+1-r\right),\thinspace\thinspace n\ge1.
\label{eq:recurrence2}
\end{equation}
\end{thm}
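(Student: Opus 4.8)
The plan is to prove the two recurrences separately, since they rest on different mechanisms: the first is pure reciprocal–power–series bookkeeping, while the second is the genus‑$1$ specialization of the logarithmic–derivative framework of Section~\ref{section2}.

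For the defining recurrence \eqref{eq:recurrence1}, I would first observe that, by \eqref{eq:Bernoulli generating}, the series $B(z):=\sum_{n\ge0}\frac{\mathcal{B}_n}{n!}z^n=\frac{\Ai(0)}{\Ai(z)}$ and $A(z):=\frac{\Ai(z)}{\Ai(0)}$ are reciprocal. Writing $A(z)=\sum_{n\ge0}\frac{a_n}{n!}z^n$, so that $a_n=n!\,[z^n]\,\tfrac{\Ai(z)}{\Ai(0)}$, the identity $A(z)B(z)=1$ is, term by term (a Cauchy product of exponential generating functions), exactly $\sum_{k=0}^n\binom{n}{k}\mathcal{B}_k a_{n-k}=n!\,\delta_n$; isolating the $k=n$ summand and using $a_0=1$ gives \eqref{eq:recurrence1}. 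It remains to pin down the $a_n$ in closed form, which I would extract from the Airy equation $\Ai''(z)=z\Ai(z)$: substituting $\Ai(z)=\sum c_m z^m$ yields the three‑term recursion $(m+1)(m+2)c_{m+2}=c_{m-1}$ with $c_2=0$, forcing $a_{3n+2}=0$ and giving $a_{3n}$, $a_{3n+1}$ as the stated Pochhammer/Gamma products after telescoping $\prod_j(3j)$ and $\prod_j(3j\pm1)$ and inserting the constants $\Ai(0)$, $\Ai'(0)$.

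For the zeta recurrences \eqref{eq:recurrence2}, I would take the logarithmic derivative of the Weierstrass factorization \eqref{eq:Weierstrass}. Since the genus is $1$ and $m=0$, the correction term $\tfrac{m}{z}+P'(z)$ of the general method reduces to the constant $\tfrac{\Ai'(0)}{\Ai(0)}$, and expanding each $\tfrac{-1/a_n}{1-z/a_n}+\tfrac1{a_n}$ as a geometric series for $|z|<|a_1|$ and summing over the zeros produces the zeta generating function
\[
\frac{\Ai'(z)}{\Ai(z)}=\frac{\Ai'(0)}{\Ai(0)}-\sum_{m\ge2}\zeta_{\Ai}(m)\,z^{m-1}.
\]
Because $B(z)=\Ai(0)/\Ai(z)$ we have $\tfrac{\Ai'(z)}{\Ai(z)}=-\tfrac{B'(z)}{B(z)}$, so rearranging gives $B'(z)=B(z)\bigl[-\tfrac{\Ai'(0)}{\Ai(0)}+\sum_{m\ge2}\zeta_{\Ai}(m)z^{m-1}\bigr]$. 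Matching the coefficient of $z^n$, with $[z^n]B'(z)=\tfrac{\mathcal{B}_{n+1}}{n!}$ on the left and the Cauchy product $\sum_{r=0}^{n-1}\tfrac{\mathcal{B}_r}{r!}\zeta_{\Ai}(n+1-r)$ on the right, reproduces the first displayed recurrence; peeling off the $r=0$ term, which equals $\zeta_{\Ai}(n+1)$ since $\mathcal{B}_0=1$, and solving yields \eqref{eq:recurrence2}.

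The genuinely delicate step is the passage from the infinite product to the zeta generating function: one must justify differentiating \eqref{eq:Weierstrass} term by term and interchanging the double sum inside the disk $|z|<|a_1|$, and track the exponential correction factor $e^{(\Ai'(0)/\Ai(0))z}$ so that the constant $\tfrac{\Ai'(0)}{\Ai(0)}$ lands precisely in the $-\tfrac{\Ai'(0)}{\Ai(0)}\tfrac{\mathcal{B}_n}{n!}$ term rather than contaminating the zeta coefficients. The remaining labor, namely the Gamma/Pochhammer simplification of the $a_n$ and the elementary coefficient extractions, is routine.
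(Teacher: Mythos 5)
Your proposal is correct and follows essentially the same route as the paper: the first recurrence is the Cauchy product of the reciprocal exponential generating functions $\frac{\Ai(0)}{\Ai(z)}\cdot\frac{\Ai(z)}{\Ai(0)}=1$, and the second comes from differentiating \eqref{eq:Bernoulli generating} and recognizing $-\Ai'(z)/\Ai(z)$ as the zeta generating function $-\frac{\Ai'(0)}{\Ai(0)}+\sum_{m\ge2}\zeta_{\Ai}(m)z^{m-1}$, which is exactly the paper's (much terser) argument. One caveat: actually carrying out your derivation of the $a_n$ from $\Ai''(z)=z\Ai(z)$ yields $a_{3n}=\frac{(3n)!}{3^{2n}n!\left(\frac{2}{3}\right)_n}$ and $a_{3n+1}=-\frac{\Gamma\left(\frac{2}{3}\right)(3n+1)!}{3^{2n+\frac{2}{3}}n!\,\Gamma\left(n+\frac{4}{3}\right)}$ (e.g.\ $a_3=+1$, which is what the displayed value of $\mathcal{B}_3$ requires), so the alternating signs in the theorem's closed forms are a typo that your computation would correct rather than reproduce as you claim.
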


\begin{proof}
The first recurrence \eqref{eq:recurrence1} is a consequence of the
identity
\[
\frac{\Ai\left(0\right)}{\Ai\left(z\right)}\frac{\Ai\left(z\right)}{\Ai\left(0\right)}=1.
\]
The second identity is obtained by taking the derivative of \eqref{eq:Bernoulli generating}
to obtain
\begin{equation}
\label{product generating}
-\Ai\left(0\right)\frac{\Ai'\left(z\right)}{\Ai^{2}\left(z\right)}=\sum_{n\ge1}\frac{\mathcal{B}_{n}}{\left(n-1\right)!}z^{n-1},
\end{equation}
and writing
\[
-\Ai\left(0\right)\frac{\Ai'\left(z\right)}{\Ai^{2}\left(z\right)}=-\frac{\Ai\left(0\right)}{\Ai\left(z\right)}\frac{\Ai'\left(z\right)}{\Ai\left(z\right)}.
\]
Recognizing the product of the generating functions of the Airy Bernoulli numbers
and the Airy zeta function in the right-hand side and comparing coefficients of $z^{n}$ in \eqref{product generating} gives the result.
\end{proof}
\begin{rem}
As in the hypergeometric and Bessel cases, the linear recurrences \eqref{eq:recurrence1} and \eqref{eq:recurrence2} allow us to recursively compute the Bernoulli
numbers $\mathcal{B}_{1},\mathcal{B}_{2},\dots$  and the values of the Airy zeta function respectively. And similarly to the hypergeometric and Bessel cases, we also deduce
\[
\zeta^{*}_{\Ai}\left( \left\{2\right\}^n \right) = \sum_{k=0}^{n} \binom{n}{k}
\left( -1 \right)^k \mathcal{B}_{k}\mathcal{B}_{n-k}.
\]
\end{rem}


\section{Conclusion}\label{section5}
These three extensions of the usual MZVs shed new light on the two questions expressed in the introduction. First, we saw that after fixing a set of zeros, the details of the Weierstrass factorization of the associated entire function are essential to correctly normalizing the resulting MZVs. Second, the systematic introduction of Bernoulli numbers as coefficients of the reciprocal of the MZV generating function gives a practical, two-step method for the evaluation of MZSVs: first derive the linear recurrence satisfied by these Bernoulli numbers, then deduce the star MZVs using another set of linear recurrences.
It may also happen that, besides satisfying linear recurrences, these Bernoulli numbers have nice integral representations (see \cite{Christophe2} in the case of hypergeometric Bernoulli numbers) that provide additional information on their properties. An obvious follow-up to this study is the extension of the approaches developed here to special functions having higher-order Weierstrass factorizations. 

\section*{Acknowledgements}
None of the authors have any competing interests in the manuscript.
This material is based upon work supported by the National Science Foundation under Grant No. DMS-1439786 while the second author was in residence at the Institute for Computational and Experimental Research in Mathematics in Providence, RI, during the Point Configurations in Geometry, Physics and Computer Science Semester Program, Spring 2018. We thank the anonymous referee for his excellent and straightforward suggestions.










\bibliographystyle{plain}

\begin{thebibliography}{99}

\bibitem{Ramanujan}Bruce C. Berndt, \emph{Ramanujan's Notebooks}, Part II,
1989, Springer

\bibitem{Broadhurst}D. J. Broadhurst and D. Kreimer, Association of multiple zeta values with positive knots via {F}eynman diagrams up to {$9$} loops, \emph{Phys. Lett. B} \textbf{393}: 3-4 (1997) 403-412.

\bibitem{Brychkov}
{Y. A. Brychkov},
\newblock {\em {Handbook of special functions},}
\newblock{Derivatives, integrals, series and other formulas,}
\newblock {CRC Press, Boca Raton, FL}, {2008}.

\bibitem{Christophe2}A. Byrnes, V. Moll, and C. Vignat, Recursion rules for the hypergeometric zeta function \emph{Int. J. Number Theory} \textbf{10} (2014), 1761--1782.

\bibitem{Chen1}K.-W. Chen, C.-L. Chung and M. Eie, Sum formulas of multiple zeta values with arguments multiples of a common positive integer, J. Number Theory, Volume 177, August 2017, pages 479--496

\bibitem{Chung1}C.-L. Chung, On the sum relation of multiple hurwitz zeta functions. Quaestiones Mathematicae, 03, (2018), 1--9.

\bibitem{Crandall}
R. E. Crandall, On the quantum zeta function, J. Phys. A: Math. Gen. 29 (1996) 6795--6816

\bibitem{Dickinson}D. Dickinson, On Lommel and Bessel polynomials,
Proceedings of the American Mathematical Society, 5-6, 946, 1954 

\bibitem{Ding}S. Ding, L. Feng and W. Liu, A combinatorial identity of multiple zeta values with even arguments, \emph{The Electronic Journal of Combinatorics}, \textbf{21-2} (2014) 2--27.

\bibitem{Dunin}P. Dunin-Barkowski, A. Sleptsov, and A. Smirnov, Kontsevich integral for knots and {V}assiliev invariants, \emph{Internat. J. Modern Phys. A} \textbf{28}: 17 (2013) 1330025, 38.

\bibitem{Flajolet}P. Flajolet and G. Louchard, Analytic Variations on the Airy Distribution, Algorithmica (2001) 31: 361--377

\bibitem{Frappier}C. Frappier, A unified calculus using the generalized {B}ernoulli polynomials, \emph{J. Approx. Theory} \textbf{2} (2001), 279--313.

\bibitem{Gencev}M. Gen\v{c}ev, On restricted sum formulas for multiple zeta values with even arguments, Arch. Math. 107, (2016), 9-22

\bibitem{Gessel Viennot}I. M. Gessel and X. G. Viennot, Determinants, paths, and plane partitions, preprint, 1989, \url{http://people.brandeis.edu/~gessel/homepage/papers/index.html}

\bibitem{Table}
{I.S. Gradshteyn and I.M. Ryzhik},
\newblock {\em {Table of integrals, series, and products},}
\newblock{Seventh ed.,}
\newblock{Translated from the Russian,}
\newblock {Elsevier/Academic Press, Amsterdam}, {2007}.

\bibitem{Grosjean}C.C. Grosjean, The orthogonality property of the
Lommel polynomials and a twofold infinity of relations between Rayleigh's
$\sigma-$sums, Journal of Computational and Applied Mathematics,
10, 355-382, 1984

\bibitem{Hardy}G.H. Hardy, A chapter from Ramanujan's note-book,
Proc. Cambridge Phil. Soc. 21, 492-503, 21, 1923

\bibitem{Henderson}R. Henderson, The Algebra of Multiple Zeta Values, Thesis, 
\url{http://citeseerx.ist.psu.edu/viewdoc/summary?doi=10.1.1.227.5432}

\bibitem{Hoffman1}M. Hoffman, On multiple zeta values of even arguments, \emph{Intl. J. Number Theory} \textbf{13}, 705, 2017.

\bibitem{Hoffman2}M. Hoffman, An odd variant of multiple zeta values, preprint, 2016, \url{https://arxiv.org/abs/1612.05232}.

\bibitem{Ihara}K. Ihara, M. Kaneko, and D. Zagier, Derivation and double shuffle relations for multiple zeta values, \emph{Compos. Math.} \textbf{142}: 2 (2006) 307--338.

\bibitem{Macdonald}
{I. G. Macdonald},
\newblock {\em {Symmetric functions and {H}all polynomials},}
\newblock{2nd ed.,}
\newblock {Oxford University Press, New York}, {1995}.

\bibitem{Preece} C. T. Preece, The Product of two Generalized Hypergeometric
Functions, Proceedings of the London Mathematical Society, s2-22,
1, 370-380, 1924

\bibitem{Prudnikov}
A.P. Prudnikov, Integrals Series: More Special Functions, 1990, Gordon and Breach Science Publishers.

\bibitem{Reid}W. H. Reid, Integral representations for products of
Airy functions, Z. angew. Math. Phys. 46 (1995), 159-170.

\bibitem{Sherstyukov}V.B. Sherstyukov and E.V. Sumin, Reciprocal
expansion of modified Bessel function in simple fractions and obtaining
general summation relationships containing its zeros, Journal of Physics:
Conf. Series 937, 012047, 2017

\bibitem{Simon}
{Barry Simon},
\newblock {\em {Basic complex analysis},}
\newblock{A Comprehensive Course in Analysis, Part 2A,}
\newblock {American Mathematical Society, Providence, RI}, {2015}.

\bibitem{Sneddon}
I.N. Sneddon, On some infinite series involving the zeros of Bessel functions of the first kind, Glasgow Mathematical Journal, 4--3, 144-156, January 1960. 

\bibitem{Vallee}O. Vall\'{e}e, M. Soares and C. de Izarra, An integral
representation for the product of Airy functions, Z. angew. Math.
Phys. 48, 156--160, 1997

\bibitem{Watson}
{G. N. Watson},
\newblock {\em {A treatise on the theory of {B}essel functions},}
\newblock{Reprint of the second (1944) edition,}
\newblock {Cambridge University Press, Cambridge}, {1995}.

\bibitem{ZagierSurvey}
{D. Zagier},
\newblock {\em {Values of zeta functions and their applications},}
\newblock{First {E}uropean {C}ongress of {M}athematics, {V}ol.\ {II}
              {P}aris, 1992,}
\newblock {Vol. 120, 497--512}, 1994.

\bibitem{Zhang}R. Zhang, Sums of zeros for certain special functions,
Integral Transforms and Special Functions, 21:5, 351-365, 2009

\bibitem{Zudilin}V. Zudilin, Algebraic relations for multiple zeta values, \emph{Russian Math. Surveys} \textbf{58} (2003) 3--32.


\end{thebibliography}

\end{document}